\newcommand{\md}[1]{\emph{#1}}
\begin{document}

\title{Decomposition of Topological Azumaya Algebras with Orthogonal Involution}

\author[Arcila-Maya]{Niny Arcila-Maya}

\date{\today}

\address{Department of Mathematics, San Francisco State University, San Francisco~CA, 94132 USA}

\email{niny.arcilamaya@sfsu.edu}

\subjclass[2020]{55P99 (Primary), 55Q52, 55S45, 16H05, 16W10 (Secondary)}
\keywords{topological Azumaya algebra, orthogonal involution, special orthogonal group}
\begin{abstract}
Let $\cA$ be a topological Azumaya algebra of degree $mn$ with an orthogonal involution over a CW complex $X$ of dimension less than or equal to $\min\{m,n\}$. We give conditions for the positive integers $m$ and $n$ so that $\cA$ can be decomposed as the tensor product of topological Azumaya algebras of degrees $m$ and $n$ with orthogonal involutions.
\end{abstract}

\maketitle

\section{Introduction}

The concept  of a central simple algebra over a field was generalized by Azumaya \cite{Azu1951} and Auslander--Goldman \cite{AG1960} to the notion of an Azumaya algebra over a commutative ring, and Grothendieck \cite{GroI1966} defined this concept in the context of a locally-ringed topos, that is, in a category of sheaves equipped with a designated choice of sheaf of rings with enough points, $\cO$, having local rings as stalks. One recovers the case of Auslander--Goldman when the category is $(\Spec R)_{\et}$ of étale sheaves over the spectrum of the ring $R$. That is, an Azumaya algebra of degree $n$ is a sheaf of $\cO$ algebras that is locally isomorphic to $\M(n,\cO)$. In the topological case, a \textit{topological Azumaya algebra of degree $n$} is defined by taking the sheaf $\cO$ to be the sheaf of continuous functions with value $\CC$, that is, it is a bundle of associative and unital complex algebras over a topological space that is locally isomorphic to the matrix algebra $\fM{n}$, \cite[1.1]{GroI1966}.

\begin{example}
The endomorphism bundle of a complex vector bundle of rank $n$ is a topological Azumaya algebra of degree $n$ over $X$.
\end{example}
\begin{example}

Let $G$ be a group acting on the left on $\fM{n}$ via algebra automorphisms. Let $X$ be a topological space with a free right $G-$action. Suppose the projection $p:X\to X/G$ is a principal $G-$bundle. Then the fiber bundle with fiber $\fM{n}$ associated to $p$, $\fM{n} \to X\times^{G}\fM{n} \to X/G$ is a topological Azumaya algebra of degree $n$ over $X/G$, where $G$ acts on $X\times\fM{n}$ by $(x,M)\cdot g\coloneqq \bigl(x\cdot g,g\cdot M\bigr)$ for all $x\in X$ and $M\in \fM{n}$.
\end{example}

In the theory of central simple algebras over a field $k$, a theorem of Wedderburn states that any central simple algebra $A/k$ has the form $\M(n,D)$, where $D/k$ is a division algebra that is unique up to isomorphism, \cite[Theorem 1.3]{Sdiv1999}. This theorem reduces the classification problem for finite-dimensional central simple algebras over $k$ to the classification problem for finite-dimensional central division algebras over $k$. Two finite-dimensional central simple algebras $A\iso\M(m,D)$ and $B\iso\M(n,E)$ are Brauer equivalent if their division algebras $D$ and $E$ are isomorphic. The Brauer group of $k$, $\Br(k)$, is the group of equivalence classes of finite-dimensional central simple algebras over $k$ modulo Brauer equivalence, with the tensor product of algebras, \cite[Proposition 4.3]{FD1993}. 

Given two topological Azumaya algebras $\cA$ and $\cA'$ over $X$ of degrees $m$ and $n$, respectively, we can define the tensor product $\cA\tensor \cA'$ by performing the tensor product of (Kronecker product of) matrices $\fM{m}\tensor_{\CC}\fM{n}$ fiberwise. We say two topological Azumaya algebras $\cA$ and $\cA'$ are \md{Brauer equivalent} if there exist complex vector bundles $\cV$ and $\cV'$ such that $\cA \tensor \End(\cV) \iso \cA' \tensor \End(\cV')$ as bundles of $\CC$-algebras. The \md{(topological) Brauer group of $X$}, $\Br(X)$, is the set of isomorphism classes of topological Azumaya algebras over $X$ modulo Brauer equivalence with the tensor product operation.  The identity of $\Br(X)$ is the class $[\End(\cV)]$ for all complex vector bundles $\cV$, and $[\cA]^{-1}=[\cA^{\text{op}}]$ for all classes $[\cA] \in \Br(X)$ since $\cA\tensor \cA^{\text{op}}\iso \End(\cA)$. We say a topological Azumaya $\cA$ is \md{Brauer trivial} if its class $[\cA] \in \Br(X)$ is equal to the identity.

If $D/k$ is a finite-dimensional central division algebra, then $\dim_{k}(D)$ is a square, \cite[Theorem 3.10]{FD1993}. The \md{degree} of $D/k$ is defined by $\displaystyle{\sqrt{\dim_{k}(D)}}$. The theory of central division algebras is equipped with a structure theorem stating that every finite-dimensional central division algebra $D/k$ can be broken up into pieces corresponding to the prime factorization of its degree $\deg(D)=p_{1}^{n_{1}}\cdots p_{r}^{n_{r}}$, i.e. $D$ is isomorphic to $D_{1}\tensor_{k} D_{2} \tensor_{k} \cdots \tensor_{k} D_{r}$ where each $D_{i}$ is a division algebra of degree $p_{i}^{n_{i}}$, and the decomposition is unique up to isomorphism, \cite[Theorem 5.7]{Sdiv1999} and \cite[Proposition 4.5.16]{GS06}. Saltman asked in \cite[page 35]{Sdiv1999} whether the analogue to the prime decomposition theorem for central simple algebras over a field holds for Azumaya algebras over a commutative ring. Using algebraic topology and topological Azumaya algebras, Antieau--Williams showed that, in general, there is no prime decomposition for these algebras, \cite[Corollary 1.3]{AW2x32014}. The author provided conditions for a positive integer $n$ and a topological space $X$ such that a topological Azumaya algebra of degree $n$ on $X$ has a non-unique tensor product decomposition, \cite[Theorem1.3, Remark 3.7]{TAA2021}.

An \md{involution} on a finite-dimensional central simple algebra $A/k$ is an anti-automorphism $\tau:A\to A$. In other words $\tau\circ \tau=\id_{A}$, $\tau(a+b)=\tau(a)+\tau(b)$, and  $\tau(ab)=\tau(b)\tau(a)$ for all for $a, b \in A$. A central simple algebra with an involution is denoted by $(A,\tau)$. It can be checked that the center $k$ is preserved under $\tau$. The restriction of $\tau$ to $k$ is therefore an automorphism which is either the identity or of order $2$. Involutions that leave the center invariant are called involutions of the \md{first kind}. Involutions whose restriction to $k$ is an automorphism of order $2$ are called involutions of the \md{second kind}, \cite{KMRT1998}. A \md{quaternion algebra} over a field $k$ is a central simple $k-$algebra of degree $2$. Any quaternion algebra has a symplectic involution called the \md{quaternion conjugation}, \cite[page 26]{KMRT1998}.  Let $(A_{1},\tau_{1}), \dots, (A_{n},\tau_{n})$ be central simple $k-$algebras with involution of the first kind. Then $\tau_{1}\tensor\cdots \tensor\tau_{n}$ is an involution of the first kind on $A_{1}\tensor_{k}\cdots \tensor_{k} A_{n}$, \cite[Proposition 2.23(1)]{KMRT1998}. Knus--Parimala--Srinivas gave a necessary and sufficient decomposability condition for an involution on a tensor product of two quaternion algebras, \cite{KPS1991}. Merkurjev showed that every central simple algebra with involution is Brauer equivalent to a tensor product of quaternion algebras, \cite{Merku}. However, Amitsur--Rowen--Tignol provided examples of division algebras of degree $8$ with involutions which do not decompose into tensor products of quaternion algebras, \cite{ART1979}.

Knus--Parimala--Srinivas generalized the notion of a central simple algebras with an involution to Azumaya algebras over schemes, \cite{KPS1990}. Saltman presented a classification of involutions of Azumaya algebras over commutative rings into kinds, \cite[Section 3]{SAzuInvo1978}. An involution on a topological Azumaya algebra is said to be an \textit{involution of the first kind} or an \textit{involution of the second kind} depending on whether or not there is a group action on the base space, respectively. All the involutions we discuss here are involutions of the first kind, which are classified as orthogonal and symplectic involutions.

\begin{definition}
Let $X$ be a connected topological space, and let $\cA$ be a topological Azumaya algebra of degree $n$ over $X$. An \textit{involution on $\cA$}  is a morphism of fiber bundles $\tau:\cA \rightarrow \cA$ such that $\tau\circ\tau=\id_{\cA}$, and when restricted to fibers it is an anti-automorphism of complex algebras. In this case, $(\cA,\tau)$ is called a \textit{topological Azumaya algebra with involution}.
\end{definition}

\begin{definition}
Let $X$ be a connected topological space, and let $(\cA,\tau)$ be a topological Azumaya algebra with involution over $X$. The involution $\tau$ is said to be \md{orthogonal} (\md{symplectic}) if the restriction $\tau|_{\cA^{-1}(x)}:\cA^{-1}(x)\to\cA^{-1}(x)$ is an orthogonal (a symplectic) involution of complex algebras for all $x\in X$, in the sense of the definition given in Subsection \ref{invoma}.
\end{definition}

In this paper, we study decomposition properties of topological Azumaya algebras endowed with orthogonal involutions, generalizing prior results from \cite{TAA2021}, which considered the case without involutions. The decomposition arguments require the degrees $m$ and $n$ to be relatively prime, which in particular rules out the even-even case and leaves two parity cases: the mixed-parity case (say $m$ even and $n$ odd, or vice versa) and the odd-odd case. Theorem \ref{mainO} treats the mixed-parity providing conditions $m$, $n$ and a topological space $X$ under which a topological Azumaya algebra of degree $mn$ over $X$ with an orthogonal involution decomposes into a tensor product of topological Azumaya algebras each carrying an orthogonal involution:
\begin{theorem}\label{mainO}
Let $m$ and $n$ be relatively prime positive integers such that $m$ is even, and $n$ is odd. Let $X$ be a CW complex such that $\dim(X)\leq d$ where $d\coloneqq\min\{m,n\}$. If $\cA$ is a topological Azumaya algebra of degree $mn$ over $X$ with an orthogonal involution, then there exist topological Azumaya algebras $\cA_{m}$ and $\cA_{n}$ of degrees $m$ and $n$, respectively, such that $\cA_{m}$ and $\cA_{n}$ have orthogonal involutions, $\cA_{n}$ is Brauer-trivial and $\cA\iso \cA_{m}\tensor\cA_{n}$. 
\end{theorem}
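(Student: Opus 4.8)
The plan is to translate the decomposition problem into a question about lifting the classifying map of the bundle along a map of classifying spaces, and then to solve that lifting problem by obstruction theory using the dimension hypothesis. A topological Azumaya algebra of degree $mn$ with an orthogonal involution over a connected $X$ is classified by a homotopy class of maps $X \to B\mathrm{PO}(mn)$ (or, working with the central extension, one tracks the structure group $\mathrm{PO}(mn) = \mathrm{O}(mn)/\{\pm 1\}$, or better $\mathrm{PGO}(mn)$ as in the theory of algebras with orthogonal involution). A tensor product decomposition $\cA \iso \cA_m \tensor \cA_n$ with $\cA_m$, $\cA_n$ carrying orthogonal involutions and $\cA_n$ Brauer-trivial corresponds to lifting that map along the Kronecker-product map $B(\mathrm{PO}(m) \times \mathrm{O}(n)) \to B\mathrm{PO}(mn)$ induced by $(\bar P, Q) \mapsto \bar P \tensor Q$; here the factor $\mathrm{O}(n)$ (not $\mathrm{PO}(n)$) records that $\cA_n$ is required to be $\End$ of an orthogonal bundle, hence Brauer-trivial. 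So the first step is to set up this reformulation carefully, checking that orthogonal involutions on a tensor product indeed come from orthogonal-tensor-orthogonal on the factors — this is where the parity hypotheses ($m$ even, $n$ odd) will be used, mirroring \cite[Proposition 2.23]{KMRT1998}: the tensor of an orthogonal and a symplectic involution is symplectic, an orthogonal$\tensor$orthogonal on even$\tensor$odd is orthogonal, and these are the only ways to land in the orthogonal locus given the parities.

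The second step is to analyze the homotopy fiber $F$ of the map $\varphi: B(\mathrm{PO}(m)\times\mathrm{O}(n)) \to B\mathrm{PO}(mn)$, equivalently the fiber of $\mathrm{PGO}(mn)/(\mathrm{PGO}(m)\times\mathrm{O}(n))$-type homogeneous space, and to show its homotopy groups vanish (or are suitably controlled) in degrees $\le d = \min\{m,n\}$. The key input should be standard connectivity estimates: the fiber of $BG_1 \to BG_2$ for $G_1 \hookrightarrow G_2$ a nicely-embedded subgroup is $G_2/G_1$, and for the block/Kronecker embeddings of orthogonal groups one has high connectivity — $\mathrm{O}(N)/\mathrm{O}(k)$ is $(k-1)$-connected — together with the fact that $\mathrm{SO}$, $\mathrm{Spin}$, etc. differ from $\mathrm{O}$ only in $\pi_0$ and $\pi_1$, which one handles by the oddness of $n$ (so $\mathrm{O}(n) = \mathrm{SO}(n)\times\{\pm 1\}$ after passing to the relevant component, and the center behaves well). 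I would compute $\pi_*(F)$ far enough to see that it is $d$-connected, or at least that the relevant obstruction groups $H^{k+1}(X; \pi_k(F))$ vanish for all $k$, which holds once $F$ is $d$-connected and $\dim X \le d$.

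The third step is the obstruction-theory argument proper: given the classifying map $f: X \to B\mathrm{PO}(mn)$ of $(\cA,\tau)$, the obstructions to lifting $f$ through $\varphi$ lie in $H^{k+1}(X; \pi_k(F))$ for $k \ge 1$ (with a separate $\pi_0/\pi_1$ discussion for connectivity of the lift, handled since $X$ is connected and $\mathrm{PO}(mn)$, $\mathrm{PO}(m)\times\mathrm{O}(n)$ have comparable component structure under the parity hypotheses), and by Step 2 these all vanish because $\dim X \le d$ and $F$ is $d$-connected. Hence a lift exists; its components give the classifying maps of $\cA_m$ (degree $m$, orthogonal involution) and $\cA_n$ (degree $n$, orthogonal involution, and Brauer-trivial because its structure group is $\mathrm{O}(n)$ rather than $\mathrm{PO}(n)$), with $\cA \iso \cA_m \tensor \cA_n$ by construction.

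The main obstacle I expect is Step 2: getting a clean, correct connectivity bound on the homotopy fiber $F$, because the Kronecker embedding $\mathrm{O}(m)\times\mathrm{O}(n) \hookrightarrow \mathrm{O}(mn)$ is \emph{not} a block embedding and its cokernel homogeneous space is not one of the standard Stiefel-type spaces; one must instead argue via the composite or via a Serre spectral sequence comparison, and simultaneously track the projective quotients $\mathrm{PO}$ versus $\mathrm{O}$ and the $\mathrm{PGO}$ subtleties, where the hypotheses "$m$ even, $n$ odd" are doing real work (they ensure, e.g., that $\mu_2$-ambiguities and the distinction between $\mathrm{O}^+$ and $\mathrm{O}^-$ components collapse in the right way so that the relevant map on $\pi_1$ is an isomorphism). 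A secondary subtlety is making precise the claim that a lift of the classifying map yields genuine Azumaya algebras with involutions whose tensor product is isomorphic (not merely Brauer equivalent) to $\cA$ — this requires that $\varphi$ be, up to homotopy, exactly the map classifying the tensor-product-of-algebras-with-involution operation, which is a bookkeeping point but needs to be stated cleanly.
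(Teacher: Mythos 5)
Your overall framing --- translate the decomposition into a lifting problem for the classifying map of $\cA$ along $f_{\tensor}\colon \B\bigl(\PO(m)\times\SO(n)\bigr)\to\B\PO(mn)$, and use obstruction theory in the range $\dim X\leq d$ --- is the right starting point, and you correctly flagged Step 2 as the crux. But the connectivity you hope for there fails, and this is exactly the obstacle the paper is built to circumvent.

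The homotopy fiber $F$ of $f_{\tensor}$ is \emph{not} $d$-connected. In the stable range $1<i<d-1$, the map $\pi_i(\PO(m)\times\SO(n))\to\pi_i(\PO(mn))$ induced by the Kronecker product is $(x,y)\mapsto nx+my$ on $\ZZ\times\ZZ\to\ZZ$ (or the analogous map with $\ZZ/2$ coefficients) by Proposition~\ref{P:tppiiOeo}. With $\gcd(m,n)=1$ this is surjective, but its kernel is infinite, so $\pi_i(F)\neq 0$ for many $i\leq d$. The obstruction groups $H^{i+1}(X;\pi_i F)$ therefore do not vanish on dimension grounds, and showing the actual obstruction cocycles vanish would require a separate argument you have not supplied.

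The paper's resolution is to enlarge the target rather than hope the fiber is small. One constructs by hand (Proposition~\ref{P:TtildaOeo}) an auxiliary homomorphism $\widetilde{\Tr}\colon\PO(m)\times\SO(n)\to\SO(N)$, built from $\tensor^{2}$ on the first factor followed by $\oplus^{u}$ and from $\oplus^{v}$ on the second, whose stable effect on $\pi_i$ is $(x,y)\mapsto 2umx+vy$ with $u,v$ chosen so that $vn-2um^{2}=\pm 1$. The combined map $J=\bigl(f_{\tensor},\B\widetilde{\Tr}\bigr)\colon\B(\PO(m)\times\SO(n))\to\B\PO(mn)\times\B\SO(N)$ is then represented on stable homotopy groups by an invertible $2\times 2$ matrix, giving an isomorphism for $i<d$ and an epimorphism in degree $d$ (Propositions~\ref{P:Jeoim} and~\ref{P:Jeom}); so $J$ is $d$-connected. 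Since $\dim X\leq d$, $J_{\#}$ on $[X,-]$ is surjective; composing $\cA$ with a section of $\proj_1\colon\B\PO(mn)\times\B\SO(N)\to\B\PO(mn)$ and lifting through $J$ gives $\cA_m\times\cA_n$ with $f_{\tensor}\circ(\cA_m\times\cA_n)\simeq\cA$. Note the parities do exactly the work you predicted: $m$ even lets $\tensor^{2}$ kill the center so $\widetilde{\Tr}$ factors through $\PO(m)$, and $n$ odd identifies $\PO(n)\iso\SO(n)$; and the paper uses $\SO(n)$ rather than your $\Or(n)$ on the second factor (equivalent up to $\pi_0$ since $n$ is odd), the structure group in $\SO(n)$ rather than $\PO(n)$ being what delivers Brauer-triviality of $\cA_n$, as you noted.
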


In the odd-odd case, the appropriate statement is no longer formulated in terms of topological Azumaya algebras. Instead, it becomes a decomposition result for orthogonal complex vector bundles, proved in Theorem \ref{mainObundles}:
\begin{theorem}\label{mainObundles}
Let $m$ and $n$ be relatively prime positive integers such that $m$ and $n$ are odd. Let $X$ be a CW complex such that $\dim(X)\leq \min\{m,n\}$. If $\cV$ is an orthogonal complex vector bundle of rank $mn$ over $X$, then there exist orthogonal complex vector bundles $\cV_{m}$ and $\cV_{n}$ of ranks $m$ and $n$, respectively, such that $\cV\iso \cV_{m}\tensor\cV_{n}$. 
\end{theorem}

It is worth observing that Theorem \ref{mainO} does not hold for CSAs with orthogonal involution. Specifically, for any $m, n > 1$, there exists a Brauer-trivial CSA with an orthogonal involution that does not decompose as the tensor product of degree-$m$ and degree-$n$ CSAs with orthogonal involution. Assuming the base field's characteristic is not $2$ for simplicity, if all Brauer-trivial CSAs with orthogonal involution of degree $mn$ could be decomposed as stated, then any $mn$-dimensional quadratic form would decompose as the tensor product of an $m$-dimensional and an $n$-dimensional quadratic form. However, the essential dimension of a ``generic'' $mn$-dimensional quadratic form is $mn$, whereas the essential dimension of the tensor product of an $m$-dimensional and an $n$-dimensional quadratic form is at most $m+n-1$. Hence, this is impossible whenever $mn>m+n-1$, which is the case whenever $m, n > 1$.

From \cite[8.2]{Steen1951} there is a bijective correspondence
\begin{align*}\label{bcTAAwithInvo}
\begin{Bmatrix} 
\text{Isomorphism classes of degree-$n$}\\
\text{topological Azumaya algebras}\\
\text{over $X$ with an involution}\\
\text{locally isomorphic to $\tau$}
\end{Bmatrix}
\leftrightarrow
\begin{Bmatrix} 
\text{Isomorphism classes of principal}\\
\text{$\Aut\bigl(\fM{n},\tau\bigr)$-bundles}\\
\text{over $X$}
\end{Bmatrix},
\end{align*}
where $\tau$ is an involution of the first kind on $\fM{n}$. Proposition \ref{P:OSp}
implies that topological Azumaya algebras over $X$ with involution are classified by $[X,\B\fPO{n}]$ in the orthogonal case, and $[X,\B\fPSp{n}]$ in the symplectic case, where $\B\fPO{n}$ and $\B\fPSp{n}$ are the classifying spaces of the projective complex orthogonal group of degree $n$ and the projective complex sympletic group of degree $2n$, respectively, see Subsection \ref{notation}.

We prove in Theorem \ref{mainO} that a map $X\rightarrow \B\fPO{mn}$ can be lifted to $\B\fPO{m}\times \B\fSO{n}$ via the map $f_{\tensor}\colon \B\fPO{m}\times \B\fSO{n}\to \B\fPO{mn}$, when the dimension of $X$ is less than $d+1$. The proof of Theorem \ref{mainO} relies in the description of the homomorphisms induced on homotopy groups by the $r$-fold direct sum of matrices $\oplus^{r}:\fO{n}\to\fO{rn}$ in the range $\{0,1,\dots,n-1\}$. We call this set \textit{the stable range for $\fO{n}$}.

Section \ref{sec:prelim} introduces the tensor product operations for compact Lie groups related to the complex symplectic and complex orthogonal groups. In Section \ref{sec:O} we review the homotopy groups of the complex orthogonal and complex projective orthogonal groups. We introduce the stabilization maps, and establish its connectivity. We also present results describing
the homomorphisms induced on homotopy groups by the operations defined in Section \ref{sec:prelim}. Section \ref{sec:mainO} is devoted to the proof of Theorem \ref{mainO}, and 
to provide examples of topological Azumaya algebras with orthogonal involutions that do not decompose as the tensor product of topological Azumaya algebras with orthogonal involutions when defined over spaces whose dimension is outside the stable range for orthogonal groups. Finally, in Section \ref{mainObundles} we prove the decomposition of complex orthogonal vector
bundles in the stable range for the complex orthogonal group.

\subsection{Acknowledgments}
The author would like to express her thankfulness to \textit{Ben Williams} for proposing this research topic, pointing out relevant references, and having devoted a great deal of time to discuss details of the research with the her. She also gratefully acknowledges \textit{Kirsten Wickelgren} and \textit{Diarmuid Crowley} for reading the introduction and providing valuable suggestions that improve its exposition. I am thankful to the anonymous referee for their thorough review of the paper and for offering insightful comments. Many of their suggestions have been incorporated into the revised version of the manuscript.

\subsection{Notation} \label{notation}

Throughout this paper, all topological spaces will have the homotopy type of a CW complex. We fix basepoints for connected topological spaces, and for topological groups we take the identities as basepoints. We write $\pi_{i}(X)$ in place of $\pi_{i}(X,x_{0})$.

We adopt the notation for the classical Lie groups as in \cite{MiToToG1991}. We denote by $\fM{n}$ the set of $n\times n$ complex matrices. We use $\fGL{n}$, $\fO{n}$, $\fSO{n}$, and $\fSp{n}$ to denote the complex general linear group of degree $n$, the complex orthogonal group of degree $n$, the special complex orthogonal group of degree $n$, and the complex symplectic group of degree $2n$, respectively. These groups are closed subgroups of $\fGL{n}$ in the orthogonal cases, and of $\fGL{2n}$ in the symplectic case. In matrix terms these groups are defined as
\begin{align*}
&\fGL{n}=\bigl\{M \in \fM{n}:\; M \text{ is invertible}\bigr\},\\
&\fO{n}=\bigl\{M \in \fM{n}: \; M^{\tr}M=MM^{\tr}=I_{n}\bigr\},\\
&\fSO{n}=\bigl\{M \in \fO{n}: \; \det(M)=1\bigr\},\; \text{ and } \;\\
&\fSp{n}=\bigl\{M \in \fM{2n}: \; M^{\tr}J_{2n} M=J_{2n} \bigr\},
\end{align*}
where $\tr$ denotes transposition, and
\[
J_{2n}=%
\begin{pmatrix}
0 & I_{n}\\
-I_{n} & 0
\end{pmatrix}.
\]
If $G$ is a group, then $Z(G)$ denotes its center. The projective complex orthogonal group of degree $n$, the projective special complex orthogonal group of degree $n$, and the projective complex sympletic group of degree $2n$ shall be denoted by $\fPO{n}\coloneqq \fO{n}/Z(\fO{n})$, $\fPSO{n}\coloneqq \fSO{n}/Z(\fSO{n})$, and $\fPSp{n}\coloneqq \fSp{n}/Z(\fSp{n})$, respectively. 

Given matrices $A_{1}, \dots, A_{r} \in \M(n,\CC)$, we write $\diag(A_{1},\dots,A_{r}) \in \M(rn,\CC)$ to denote the block-diagonal matrix with blocks $A_{1}, \dots, A_{r}$ along the diagonal.

\section{Preliminaries}  \label{sec:prelim}

\subsection{Involutions on matrix algebras} \label{invoma}

Let $A \in \fGL{n}$, we denote by $\Inn_{A}:\fM{n}\to\fM{n}$ the inner automorphism induced by $A$, $\Inn_{A}(M)=AMA^{-1}$ for all $M\in \fM{n}$.

Observe that transposition, $\tr:\fM{n} \to \fM{n}$, is an involution of the first kind on $\fM{n}$. All automorphisms of $\bigl(\fM{n},\tr\bigr)$ arise by conjugation by an invertible matrix: Let $\tau$ be an arbitrary involution on $\fM{n}$. Since $\tr\circ \,\tau$ is an automorphism of $\bigl(\fM{n},\tr\bigr)$, then $\tr\circ\, \tau=\Inn_{A}$ for some $A \in \fGL{n}$. Hence $\tau(M)=A^{-\tr}M^{\tr}A^{\tr}$. Since $\tau$ is an involution, then $M=(A^{-\tr}A)M(A^{-1}A^{\tr})$. Therefore $\Inn_{A^{-1}A^{\tr}}=\id_{\fM{n}}$, this is $A^{-1}A^{\tr}=\lambda I_{n}$ for some $\lambda \in \CC$. This last equation implies $A^{\tr}=\lambda A$, and thus $A=\lambda^{2}A$. Hence $\lambda=1$ or $\lambda=-1$, i.e. $A^{\tr}=A$ or $A^{\tr}=-A$. 

Consider the subspaces
\begin{align*}
\Bigl(\fM{n},\tau\Bigr)^{+}&=\Bigl\{M\in \fM{n}:\;\;\tau(M)=M\Bigr\} \quad \text{(symmetric elements), and}\\
\Bigl(\fM{n},\tau\Bigr)^{-}&=\Bigl\{M\in \fM{n}:\;\;\tau(M)=-M\Bigr\} \quad \text{(skew symmetric elements).}
\end{align*}

Observe that 
\[
\Bigl(\fM{n},\tau\Bigr)^{+}=%
\begin{cases}
\Bigl(\fM{n},\tr\Bigr)^{+}A & \text{if $A^{\tr}=A$},\\
\Bigl(\fM{n},\tr\Bigr)^{-}A & \text{if $A^{\tr}=-A$}.
\end{cases}
\]
where $\dim_{\CC}\Bigl(\fM{n},\tr\Bigr)^{+}=\frac{1}{2}n(n+1)$ and $\dim_{\CC}\Bigl(\fM{n},\tr\Bigr)^{-}=\frac{1}{2}n(n-1)$.

In summary, we obtain the following proposition.
\begin{proposition}\label{P:OSp}
Let $\tau$ be an involution of the first kind on $\fM{n}$.
\begin{enumerate}
\item The involution $\tau$ on $\fM{n}$ has the form $\tau=\Inn_{A}\circ \tr$ for some $A\in \fGL{n}$ such that $A^{\tr}=\pm A$.

\item The subspace $\Bigl(\fM{n},\tr\Bigr)^{+}$ has complex dimension $\frac{1}{2}n(n+1)$ if and only if $A^{\tr}=A$.

\item The subspace $\Bigl(\fM{n},\tr\Bigr)^{+}$ has complex dimension $\frac{1}{2}n(n-1)$ if and only if $A^{\tr}=-A$.
\end{enumerate}
\end{proposition}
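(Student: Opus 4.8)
The plan is to derive all three assertions from one explicit normal form for $\tau$, obtained by comparing it with transposition. For part (1): both $\tr$ and $\tau$ are anti-automorphisms of $\fM{n}$, so the composite $\tr\circ\tau$ is a $\CC$-algebra automorphism of $\fM{n}$, hence inner by Skolem--Noether; writing $\tr\circ\tau=\Int_{C}$ and using $\tr\circ\tr=\id$ one gets $\tau=\Int_{A}\circ\tr$ for a suitable invertible $A$, that is $\tau(M)=AM^{\tr}A^{-1}$. (This is the argument already sketched in the text preceding the statement.) To pin down $A^{\tr}=\pm A$, impose $\tau\circ\tau=\id_{\fM{n}}$: a short computation gives $\tau^{2}(M)=(AA^{-\tr})M(AA^{-\tr})^{-1}$, so $AA^{-\tr}$ is central, say $AA^{-\tr}=\lambda I_{n}$ with $\lambda\in\CC^{\times}$; then $A=\lambda A^{\tr}$, transposing yields $A^{\tr}=\lambda A$, hence $A=\lambda^{2}A$ and $\lambda^{2}=1$. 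So $A^{\tr}=A$ or $A^{\tr}=-A$, which is (1).

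For parts (2) and (3) I would read off the symmetric subspace directly from this normal form. Parametrizing a general matrix as $M=AS$, the fixed-point equation $AM^{\tr}A^{-1}=M$ becomes $S^{\tr}A^{\tr}=SA$; when $A^{\tr}=A$ this is $S^{\tr}=S$, and when $A^{\tr}=-A$ it is $S^{\tr}=-S$, which is exactly the identification $(\fM{n},\tau)^{+}=A\cdot(\fM{n},\tr)^{\pm}$ displayed before the proposition. Since left multiplication by the invertible matrix $A$ is a $\CC$-linear isomorphism of $\fM{n}$, in the two cases $\dim_{\CC}(\fM{n},\tau)^{+}$ equals $\frac{1}{2}n(n+1)$ and $\frac{1}{2}n(n-1)$ respectively. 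These two numbers are distinct for every positive integer $n$, so the dimension of the symmetric subspace determines the sign in $A^{\tr}=\pm A$, and this gives the two biconditionals. (I read (2)--(3) as referring to $(\fM{n},\tau)^{+}$, the symmetric elements for $\tau$, rather than to $(\fM{n},\tr)^{+}$, whose dimension is always $\frac{1}{2}n(n+1)$.)

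I do not expect a real obstacle: everything past the use of Skolem--Noether is a chain of elementary matrix manipulations, all of which are already carried out in the preceding discussion, and the proposition is essentially that discussion's summary. The one point worth stating carefully is why the dimension count yields an \emph{if and only if}: it is precisely that $\frac{1}{2}n(n+1)\neq\frac{1}{2}n(n-1)$, so the two mutually exclusive cases $A^{\tr}=A$ and $A^{\tr}=-A$ produce different invariants and can therefore be told apart by it. This numerical separation is what will later allow one to distinguish $\B\fPO{n}$ from $\B\fPSp{n}$ when classifying topological Azumaya algebras with involution of the first kind.
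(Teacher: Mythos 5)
Your argument is correct and matches the paper's own reasoning (given in the discussion preceding the proposition): Skolem--Noether gives $\tau=\Int_{A}\circ\tr$, the involution condition forces $A^{\tr}=\pm A$, and left multiplication by $A$ carries $(\fM{n},\tr)^{\pm}$ onto $(\fM{n},\tau)^{+}$, so the dimension count follows. You are also right that parts (2) and (3) should read $(\fM{n},\tau)^{+}$ rather than $(\fM{n},\tr)^{+}$ --- that is a typo in the statement --- and the separation $\frac{1}{2}n(n+1)\neq\frac{1}{2}n(n-1)$ is exactly what makes the biconditionals hold.
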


If $\tau$ is an involution on $\fM{n}$, then the \textit{type} of $\tau$ is said to be \textit{orthogonal} if $A^{\tr}=A$, and \textit{symplectic} if $A^{\tr}=-A$. Up to isomorphism, the matrix algebra $\fM{n}$ can carry at most one involution (transposition) if $n$ is odd; and up to two involutions (orthogonal and symplectic) if $n$ is even, \cite[Proposition I.2.20]{KMRT1998}.

\subsection{Non-degenerate bilinear forms} \label{tpoBf}
Let a pair $(V,B)$ denote a finite-dimensional complex vector space, $V$, with a non-degenerate bilinear form, $B$. If $\dim_{\CC}V$ is even, $V$ can be given both a skew-symmetric bilinear form and a symmetric bilinear form.  If $\dim_{\CC}V$ is odd, $V$ can be given a symmetric bilinear form. These bilinear forms are unique up to isomorphism, given that $\CC$ is algebraically closed, \cite[Theorem 9.13]{Car2017}.

The automorphism group of $(V,B)$ is
\[
\Aut(V,B)=\Bigl\{T \in \fGL{n}: \; B(Tv,Tw)=B(v,w) \text{ for all } v,w \in V\Bigr\}.
\]

Let $(\CC^{n}, B)$ and $(\CC^{2n}, B')$ denote the spaces $\CC^{n}$ and $\CC^{2n}$ with the standard symmetric bilinear form and the standard skew-symmetric bilinear form, respectively. Then 
\[
\Aut(\CC^{n}, B) \iso \fO{n} \quad \text{ and } \quad \Aut(\CC^{2n}, B') \iso \fSp{n}.
\]

Let $(V,B)$ and $(V,B')$ be finite-dimensional complex vector spaces with non-degenerate bilinear forms. The tensor product $(V, B)\tensor (V',B')$ is a non-degenerate bilinear form over $\CC$, where $(B \tensor B')(v\tensor v',w\tensor w')=B(v,w)B'(v',w')$ (multiplication in $\CC$), \cite[Section 1.21, Section 1.27]{Gre1967}. The tensor product is bifunctorial, so that one obtains induced maps
\begin{equation}\label{tensorAut}
\begin{tikzcd}
\tensor: \Aut(V,B) \times \Aut(V',B') \arrow[r] &\Aut(V \tensor V', B \tensor B').
\end{tikzcd}
\end{equation}

The following tensor product operation is of interest in the subsequent sections. Let $(\CC^{m}, B)$ and $(\CC^{n}, B')$ be the spaces $\CC^{m}$ and $\CC^{n}$ with the standard symmetric bilinear forms. Then, the tensor product \eqref{tensorAut} induces a homomorphism 
\begin{equation*}
\begin{tikzcd}
\tensor:\fO{m} \times \fO{n} \arrow[r] & \fO{mn}.
\end{tikzcd}
\end{equation*}

\begin{remark}
The tensor product \eqref{tensorAut} also induces the following tensor products on Lie groups.
\begin{enumerate}
\item \label{SpO} Let $(\CC^{2m}, B)$ be the space $\CC^{2m}$ with the standard skew-symmetric bilinear form, and $(\CC^{n}, B')$ be the space $\CC^{n}$ with the standard symmetric bilinear form. Then, the tensor product \eqref{tensorAut} induces a homomorphism 
\begin{equation*}
\begin{tikzcd}
\tensor:\fSp{m} \times \fO{n}\arrow[r] & \fSp{mn}.
\end{tikzcd}
\end{equation*}

\item \label{SpSp} Let $(\CC^{2m}, B)$ and $(\CC^{2n}, B')$ be the spaces $\CC^{2m}$ and $\CC^{2n}$ with the standard skew-symmetric bilinear forms. From the tensor product \eqref{tensorAut} there is a homomorphism 
\begin{equation*}
\begin{tikzcd}
\tensor:\fSp{m} \times \fSp{n} \arrow[r] & G,
\end{tikzcd}
\end{equation*}
where $G\leq \fGL{4mn}$ denotes $\Aut(\CC^{4mn}, B \tensor B')$. Observe that $B\tensor B'$ is not the standard symmetric bilinear form on $\CC^{4mn}$. Let $P \in \fGL{4mn}$  be matrix associated to changing the basis of $\CC^{4mn}$ to an orthonormal basis. Then we have the commutative square below
\begin{equation*}
\begin{tikzcd}[row sep=large]
G \arrow[r,hookrightarrow] \arrow[d,"\Inn_{P}"] & \GL(4mn,\CC) \arrow[d,"\Inn_{P}"]\\
\Or(4mn,\CC) \arrow[r,hookrightarrow] \arrow[u,leftarrow,"\iso"] & \GL(4mn,\CC). \arrow[u,leftarrow,"\iso"]
\end{tikzcd}
\end{equation*}

Thus, the composite $\Inn_{P}\circ \,\tensor:\fSp{m} \times \fSp{n} \to G \to \fO{4mn}$ gives a homomorphism 
\begin{equation*}
\begin{tikzcd}
\stensor:\fSp{m} \times \fSp{n}\arrow[r] &  \fO{4mn}.
\end{tikzcd}
\end{equation*}
\end{enumerate}
\end{remark}

To simplify notation, in the following sections we write $\Or(n)$, $\SO(n)$, $\PO(n)$, and $\PSO(n)$ in place of $\fO{n}$, $\fSO{n}$, $\fPO{n}$, and $\fPSO{n}$, respectively, except in subsection titles.

\section{Stabilization of operations on the complex orthogonal group} \label{sec:O}

In this section, we recall the homotopy groups of $\Or(n)$ and $\SO(n)$ in the stable range, and their first unstable homotopy groups. We compute the homotopy groups of $\PO(n)$ and $\PSO(n)$ in low degrees.

For $n=1,2$ we have:
\begin{align*}
&\Or(1)=S^{0}, & &\PO(2)\iso \Or(2),\\
&\SO(1)=\{1\},  & & \SO(2)\iso \U(1)\iso S^{1}, \quad \text{and}\\
&\PO(1)=\PSO(1)=\{1\}, & & \PSO(2)\iso S^{1}.
\end{align*}

The group $\SO(n)$ is the connected component of the identity of $\Or(n)$, hence $\pi_{0}(\SO(n))$ is trivial for all $n\geq 1$. Moreover, for $n\geq 2$, the determinant map fits into a fiber sequence $\SO(n)\hookrightarrow \Or(n) \xrightarrow{\det} \ZZ/2$, where $\ZZ/2$ is given the discrete topology. The associated long exact sequence in homotopy then shows that
\[
\pi_{i}(\SO(n))\iso\pi_{i}(\Or(n)) \quad \text{ for all } \quad i\geq 1.
\]

For $n\geq 3$ and $i<n-1$, Bott periodicity yields
\[
\pi_{i}(\Or(n))\iso
\begin{cases}
0 & \text{if $i\equiv2,4,5,6$ (mod 8),}\\
\ZZ/2 & \text{if $i\equiv0,1$ (mod 8),}\\
\ZZ & \text{if $i\equiv3,7$ (mod 8).}
\end{cases}
\]

To compute the homotopy groups of $\PO(n)$ and $\PSO(n)$ for $n\geq 3$, consider the diagrams \eqref{cd:Oeven} and \eqref{cd:Oodd} below for $k\geq 2$.
\begin{equation}\label{cd:Oeven}
\begin{tikzcd}[column sep=large]
\{\pm I_{2k}\} \arrow[equal,r] \arrow[hookrightarrow,d] & \{\pm I_{2k}\}  \arrow[r] \arrow[hookrightarrow,d] & \{1\} \arrow[d] \\
\SO(2k) \arrow[hookrightarrow,r] \arrow[twoheadrightarrow,d] & \Or(2k) \arrow[twoheadrightarrow,r,"\det"] \arrow[twoheadrightarrow,d] & \{\pm 1\} \arrow[equal,d] \\
\PSO(2k) \arrow[r] & \PO(2k) \arrow[r,"\det"] & \{\pm 1\}
\end{tikzcd}
\end{equation}
\begin{equation}\label{cd:Oodd}
\begin{tikzcd}[column sep=large]
\{I_{2k-1}\} \arrow[hookrightarrow,r] \arrow[hookrightarrow,d] & \{\pm I_{2k-1}\}  \arrow[twoheadrightarrow,r] \arrow[hookrightarrow,d] & \{\pm 1\} \arrow[equal,d] \\
\SO(2k-1) \arrow[hookrightarrow,r] \arrow[twoheadrightarrow,d] & \Or(2k-1) \arrow[twoheadrightarrow,r,"\det"] \arrow[twoheadrightarrow,d] & \{\pm 1\} \arrow[d]\\
\PSO(2k-1) \arrow[r] & \PO(2k-1) \arrow[r,"\det"] & \{ 1\}
\end{tikzcd}
\end{equation}

In both \eqref{cd:Oeven} and \eqref{cd:Oodd}, each column is exact, and the top two rows are exact. By the nine-lemma, the bottom rows are exact as well. In particular, the quotient maps induce isomorphisms on homotopy groups in degrees $\geq 2$, and we obtain:
\begin{align*}
\pi_{i}(\PO(2k))&\iso \pi_{i}(\PSO(2k)) \iso \pi_{i}(\SO(2k))\quad \text{for all} \quad i\geq 2, \quad \text{and}\\
\PO(2k-1)&\iso \PSO(2k-1)\iso \SO(2k-1)
\end{align*}

Thus the only remaining case is the computation of the fundamental group of $\PSO(2k)$ for $k\ge 2$ (equivalently, for $n\ge 3$ even). Recall that the spin group $\Spin(n)$ is the universal double cover of $\SO(n)$ \cite[Page 74]{MiToToG1991}. Since $\SO(n)\to \PSO(n)$ is also a double cover for $n\ge 3$, the group $\Spin(n)$ is in fact the universal covering group of $\PSO(n)$ as well. Let
\[
1\longrightarrow \Ker(p)\longrightarrow \Spin(n)\xrightarrow{\,p\,}\PSO(n)\longrightarrow 1
\]
denote the corresponding covering homomorphism. Then $\pi_{1}(\PSO(n))\iso \Ker(p)$, and because $\Spin(n)$ is simply connected, $\Ker(p)$ agrees with the center $Z(\Spin(n))$. By \cite[Theorem II.4.4]{MiToToG1991}, for $n\ge 3$ the center is
\begin{equation}\label{ZSpinn}
Z(\Spin(n))\cong
\begin{cases}
\ZZ/2 & \text{if $n$ is odd},\\
\ZZ/2\oplus\ZZ/2 & \text{if $n\equiv 0 \pmod 4$},\\
\ZZ/4 & \text{if $n\equiv 2 \pmod 4$}.
\end{cases}
\end{equation}

\subsection{First unstable homotopy group of \texorpdfstring{$\fO{n}$}{On}}

The standard inclusion $i:\Or(n) \hookrightarrow \Or(n+1)$ is $(n-1)$-connected, hence it induces an isomorphism on homotopy groups in degrees less than $n-1$ and an epimorphism in degree $n-1$. One way to see this is to use the fibration $\Or(n) \hookrightarrow \Or(n+1) \rightarrow \Or(n+1)/\Or(n)\simeq S^{n}$, and the associated long exact sequence in homotopy.

In particular, the first unstable homotopy group of $\Or(n)$ occurs in degree $n-1$.  The relevant portion of the long exact sequence is
\begin{equation*}\label{segmentles}
\begin{tikzcd}
\pi_{n}(S^{n}) \arrow[r,"\partial"] & \pi_{n-1}(\Or(n)) \arrow[r,"i_{*}"] & \pi_{n-1}(\Or(n+1))  \arrow[r]  & 0.
\end{tikzcd}
\end{equation*}

By exactness of the sequence above, there is a short exact sequence
\begin{equation}\label{ses:piOn}
\begin{tikzcd}
0 \arrow[r] & \Ker i_{*} \arrow[r,hookrightarrow] & \pi_{n-1}(\Or(n)) \arrow[r,"i_{*}"] & \pi_{n-1}(\Or)  \arrow[r]  & 0,
\end{tikzcd}
\end{equation}
where $\Or\coloneqq \colim_{n\to \infty} \Or(n)$ and $\pi_{n-1}(\Or)\iso \pi_{n-1}(\Or(n+1))$.
When $n=2$, we have $\Or(2)\simeq S^{1}$, hence $\pi_{1}(\Or(2))=\pi_{1}(S^{1})=\ZZ$. Now suppose $n=3$ or $n=7$. By \cite[Corollary IV.10.6, Theorem IV.10.8]{WhiHT1978}, the group $\Ker i_{*}$ is trivial. Therefore $\pi_{n-1}(\Or(n))\iso\pi_{n-1}(\Or)$, and in particular $\pi_{n-1}(\Or(n))$ is trivial in these cases.

Finally, if $n\neq 1,2,3,7$, the short exact sequence \eqref{ses:piOn} splits. Moreover, by \cite[Corollary IV.6.14]{MiToToG1991} we have
\begin{align}\label{pin1On}
\pi_{n-1}(\Or(n)) \iso %
\begin{cases}
\ZZ \oplus \ZZ & \text{if $n\equiv0,4$ (mod 8)}, \\
\ZZ/2 \oplus \ZZ/2& \text{if $n\equiv1$ (mod 8)}, \\
\ZZ \oplus \ZZ/2 & \text{if $n\equiv2$ (mod 8)}, \\
\ZZ/2 & \text{if $n\equiv3,5,7$ (mod 8)},\\
\ZZ & \text{if $n\equiv6$ (mod 8)}.
\end{cases}
\end{align}

Let $G\in \Bigl\{\Or(n), \SO(n), \PO(n), \PSO(n)\Bigr\}$. Tables \ref{tab:connectcomp}, \ref{tab:fundgroup}, \ref{tab:lowhg}, and \ref{tab:1stunstable} summarize the previous results.

\begin{table}[h!]
\centering
{\small \renewcommand{\arraystretch}{1.2}
\begin{tabular}{|c|cccc|}
\hline
$G$ & $\Or(n)$ & $\SO(n)$ & $\PO(2k)$ & $\PSO(2k)$\\
\hline
$\pi_{0}(G)$ & $\ZZ/2$ & 1 & $\ZZ/2$ & 1\\
\hline
\end{tabular}}
\caption{Connected components of compact Lie groups related to the complex orthogonal group for $n\geq 1$ and $k\geq 2$.}
\label{tab:connectcomp}
\end{table}

\begin{table}[h!]
\centering
{\small \renewcommand{\arraystretch}{1.2}
\begin{tabular}{|c|cccc|}
\hline
$G$ & $\Or(n)$ & $\SO(n)$ & $\PO(2k)$ & $\PSO(2k)$\\
\hline
$\pi_{1}(G)$ & $\ZZ/2$ & $\ZZ/2$ & $Z(\Spin(2k))$ & $Z(\Spin(2k))$\\
\hline
\end{tabular}}
\caption{Fundamental group of compact Lie groups related to the complex orthogonal group for $n\geq 3$ and $k\geq 2$.}
\label{tab:fundgroup}
\end{table}

\begin{table}[h!]
\centering
{\small \renewcommand{\arraystretch}{1.2}
\begin{tabular}{|c|cccccccc|}
\hline
$i>1$ and $i$ (mod $8$) & $0$ & $1$ & $2$ & $3$ & $4$ & $5$ & $6$ & $7$\\
\hline
$\pi_{i}(G)$ & $\ZZ/2$ & $\ZZ/2$ & $0$ & $\ZZ$ & $0$ & $0$ & $0$ & $\ZZ$\\
\hline
\end{tabular}}
\caption{Homotopy groups of compact Lie groups related to the complex orthogonal group for for $n\geq 3$ and $i=2,\dots,n-2$}
\label{tab:lowhg}
\end{table}

\begin{table}[h!]
\centering
{\small \renewcommand{\arraystretch}{1.2}
\begin{tabular}{|c|cccccccc|}
\hline
$n$ equals to & & & $2$ & $3$ & $4$ & $5$ & $6$ & $7$\\
\hline
$\pi_{n-1}(G)$ &  &  & $\ZZ$ & $0$ & $\ZZ\oplus \ZZ$ & $\ZZ/2$ & $\ZZ$ & $0$\\
\hline
\hline
 $n>7$ and $n$ (mod $8$) & $0$ & $1$ & $2$ & $3$ & $4$ & $5$ & $6$ & $7$\\
\hline
$\pi_{n-1}(G)$ & $\ZZ \oplus \ZZ$  & $\ZZ/2\oplus \ZZ/2$ & $\ZZ/2\oplus \ZZ$ & $\ZZ/2$ & $\ZZ\oplus \ZZ$ & $\ZZ/2$ & $\ZZ$ & $\ZZ/2$\\
\hline
\end{tabular}}

\caption{First unstable homotopy group of compact Lie groups related to the complex orthogonal group.}
\label{tab:1stunstable}
\end{table}

\subsection{Stabilization}
Let $m,n \in \NN$ and $m\leq n$. Define the map $\s:\Or(m) \rightarrow \Or(m+n)$ by
\begin{equation*}
\s(A)=
\begin{pmatrix}
A & 0\\
0 & I_{n}
\end{pmatrix}.
\end{equation*}

Since the map $\s$ is equal to the composite of consecutive canonical inclusions, it follows that $\s$ is $(m-1)$-connected.

\begin{notation}
Let $\esta$ denote the homomorphisms the map $\s$ induces on homotopy groups. From now on, we will identify $\pi_{i}(\Or(m))$ with $\pi_{i}(\Or(m+n))$ for all $i<m-1$ through the isomorphism
\begin{equation}
\begin{tikzcd}[row sep=tiny, column sep=normal]
\esta:\pi_{i}(\Or(m)) \arrow[r,rightarrow,"\iso"] & \pi_{i}(\Or(m+n)).
\end{tikzcd}
\end{equation}
\end{notation}

The following lemma is a technical result that will be used to prove Lemma \ref{L:sjO} and Lemma \ref{L:RLtpO}.
\begin{lemma}\label{techl}
Let $G$ be a Lie group and let $G_{0}$ be the component of the identity. If $c:G\rightarrow G$ is conjugation by $P \in G_{0}$, then there is a basepoint preserving homotopy $H$ from $c$  to $\id_{G}$ such that for all $t \in [0,1]$, $H(-,t)$ is a homomorphism.
\end{lemma}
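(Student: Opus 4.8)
The plan is to build the homotopy explicitly on the level of conjugations. Since $G_0$ is a connected Lie group, it is generated by the image of its exponential map, but more conveniently, $G_0$ is path-connected, so we may choose a path $\gamma\colon[0,1]\to G_0$ with $\gamma(0)=e$ and $\gamma(1)=P$. For each $t\in[0,1]$ define $H(-,t)\colon G\to G$ to be conjugation by $\gamma(t)$, that is $H(g,t)=\gamma(t)\,g\,\gamma(t)^{-1}$. Then $H(-,0)=\Int_{e}=\id_G$ and $H(-,1)=\Int_{P}=r$, and each $H(-,t)=\Int_{\gamma(t)}$ is a group homomorphism $G\to G$ because inner automorphisms are homomorphisms. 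Continuity of $H$ as a map $G\times[0,1]\to G$ follows from continuity of the multiplication and inversion maps on the Lie group $G$ together with continuity of $\gamma$. Finally, $H$ is basepoint preserving: the basepoint of $G$ is the identity $e$, and $H(e,t)=\gamma(t)\,e\,\gamma(t)^{-1}=e$ for every $t$.

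It remains to arrange that $\gamma$ itself can be taken continuous with the stated endpoints; this is immediate since $G_0$, the identity component of a Lie group, is path-connected, and $P\in G_0$ by hypothesis. (If one prefers a smooth path, connectedness of $G_0$ plus local path-connectedness gives a piecewise-smooth, hence continuous, path, which is all that is needed here.) Reversing the roles of the endpoints if desired, one obtains a homotopy from $r$ to $\id_G$ rather than the other way around; since homotopy through homomorphisms is symmetric under reparametrization $t\mapsto 1-t$, the direction is not essential.

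I do not anticipate a genuine obstacle here: the only point requiring care is to verify that $H$ is jointly continuous in $(g,t)$ and that each slice is a homomorphism, both of which reduce to standard facts about topological groups. The lemma will be applied in situations where $P$ arises as a change-of-basis matrix lying in the identity component (for instance an element of $\SO(n)$ viewed inside $\Or(n)$, or an element of $\fSp{n}$), so the hypothesis $P\in G_0$ is exactly what one has available, and the conclusion that the homotopy passes through homomorphisms is what lets one conclude that $\Int_P$ and $\id_G$ induce the same map on homotopy groups compatibly with the group structure.
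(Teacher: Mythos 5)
Your proof is correct and follows essentially the same approach as the paper: choose a path in $G_{0}$ joining $P$ to the identity and conjugate along it, noting that each slice of the homotopy is an inner automorphism and hence a homomorphism. In fact you are slightly more careful than the paper's own write-up, which says ``Since $G$ is path-connected'' where what is actually used (and what you correctly invoke) is path-connectedness of the identity component $G_{0}$.
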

\begin{proof}
Since $G_{0}$ is path-connected, there exists a path $\alpha$ from $P$ to $e_{G}$ in $G$. Define $H:G\times [0,1] \rightarrow G$ by $H(A,t)=\alpha(t)A\alpha(t)^{-1}$. Observe that $H(-,t):G\to G$, $A\mapsto H(A,t)$ is a homomorphism. Moreover, $H$ is such that  $H(e_{G},t)=e_{G}$, $H(A,0)=c(A)$, and $H(A,1)=A$.
\end{proof}

\begin{lemma}\label{L:sjO}
Let $n,r \in \NN$. For all $j=1,\dots,r$ define $\s_{j}:\Or(n) \rightarrow \Or(rn)$ by 
\[
\s_{j}(A)=\diag(I_{n},\dots,I_{n},A,I_{n},\dots,I_{n}),
\]
where $A$ is in the $j$-th position. Then the map $\s_{j}$ is pointed homotopic to $\s_{j+1}$ for $j=1,\dots,r-1$.
\end{lemma}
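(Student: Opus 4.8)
The plan is to realize $\s_j$ and $\s_{j+1}$ as differing by conjugation by a permutation matrix, and then to replace that permutation matrix by one lying in the identity component $\SO(rn)$ so that Lemma~\ref{techl} applies. Concretely, let $P \in \Or(rn)$ be the block permutation matrix that swaps the $j$-th block $\CC^n$ with the $(j+1)$-th block $\CC^n$ and fixes all other blocks. Then a direct computation on block-diagonal matrices shows $\s_{j+1}(A) = P\,\s_j(A)\,P^{-1}$ for all $A \in \Or(n)$; that is, $\s_{j+1} = \Int_P \circ \s_j$. So it suffices to produce a pointed homotopy through homomorphisms from $\Int_P \circ \s_j$ to $\s_j$.

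The subtlety is that $\det(P) = (-1)^n$, so $P$ need not lie in $\SO(rn)$, and Lemma~\ref{techl} requires the conjugating element to be in the identity component. I would fix this by modifying $P$ within $\Or(n)\times\Or(n)$ acting on the two relevant blocks: replace the "swap" by a swap composed with a sign change on one coordinate, i.e. choose $Q \in \Or(rn)$ supported on blocks $j$ and $j+1$ with $Q$ still intertwining $\s_j$ and $\s_{j+1}$ after a harmless adjustment, and with $\det(Q) = 1$. The cleanest way: on $\CC^n \oplus \CC^n$, the block matrix $\begin{pmatrix} 0 & I_n \\ I_n & 0\end{pmatrix}$ has determinant $(-1)^n$, while $\begin{pmatrix} 0 & I_n \\ I_n & 0\end{pmatrix}\cdot \diag(D, I_n)$ for a reflection $D \in \Or(n)$ with $\det(D) = -1$ gives determinant $1$; but conjugation by this modified matrix sends $\s_j(A)$ to $\s_{j+1}(DAD^{-1})$, not $\s_{j+1}(A)$. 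To repair this one precomposes with the automorphism $\Int_D$ of $\Or(n)$, which is itself pointed-homotopic to the identity only if $D \in \SO(n)$ — which it is not.

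A better route avoids reflections entirely: observe that we only need the homotopy of maps $\Or(n) \to \Or(rn)$, and since $r \geq 2$ there is always a "spare" block. Concretely, choose the conjugating element $P$ to be the permutation that cyclically rotates blocks $j, j+1$, and any third block index $\ell$ (possible because $rn \geq 2n$ forces at least, well, when $r \geq 3$; for $r = 2$ argue separately using that the $2n \times 2n$ swap times $-I_{2n}$ has determinant $1$ and $-I_{2n}$ is central so conjugation by it is trivial). In general: for $r \geq 2$, the block-swap matrix $P_0$ satisfies $\det(P_0) = (-1)^n$; multiply it by the block matrix $R = \diag(I_n,\dots, -I_n, \dots, I_n, \dots)$ placing $-I_n$ in, say, block $j$, so that $\det(R) = (-1)^n$ and hence $\det(RP_0) = 1$, i.e. $RP_0 \in \SO(rn)$. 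Since $-I_n$ is central in $\Or(n)$, conjugation by $R$ is the identity on the image of $\s_j$ and on the image of $\s_{j+1}$, so $\Int_{RP_0} \circ \s_j = \Int_{P_0} \circ \s_j = \s_{j+1}$. Now $RP_0$ lies in the identity component, and Lemma~\ref{techl} (applied with $G = \Or(rn)$, $G_0 = \SO(rn)$, and $P = RP_0$) produces a basepoint-preserving homotopy $H(-,t)$ through homomorphisms from $\Int_{RP_0}$ to $\id_{\Or(rn)}$; composing with $\s_j$ gives the desired pointed homotopy $H(\s_j(-), t)$ from $\s_{j+1}$ to $\s_j$.

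The main obstacle, then, is purely the determinant/orientation bookkeeping: naively the conjugating permutation has determinant $(-1)^n$ and fails to be in the identity component when $n$ is odd, so Lemma~\ref{techl} does not directly apply. The trick of absorbing a central sign $-I_n$ on one block — which changes the determinant by $(-1)^n$ while acting trivially by conjugation on the block-diagonal images — is the one nontrivial observation; everything else is a routine block-matrix computation. I would also double-check the edge cases $r = 1$ (vacuous, no $j$ to consider) and small $n$ to be safe, though the argument above is uniform in $n$.
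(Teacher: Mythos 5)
Your argument is correct, and the key trick genuinely differs from the paper's. Both proofs begin by writing $\s_{j+1} = \Int_{P_j} \circ \s_j$ for the block transposition $P_j$ and both appeal to Lemma~\ref{techl}; the shared obstruction is that $\det(P_j) = (-1)^n$, so $P_j \notin \SO(rn)$ when $n$ is odd. The paper's correction multiplies $P_j$ by a single $2\times 2$ transposition $W_d$ or $W_u$ placed in a block \emph{not} occupied by $A$ under $\s_j$ or $\s_{j+1}$ (the last two coordinates, respectively the first two), so that the correction commutes with the relevant images; this presupposes a spare block. Your correction instead multiplies $P_j$ by $R = \diag(I_n,\dots,-I_n,\dots,I_n)$ with $-I_n$ placed in block $j$; since $-I_n$ is central in $\Or(n)$, the conjugation $\Int_R$ is trivial on the block-diagonal image while $\det(RP_j) = 1$, so $RP_j \in \SO(rn)$ and $\Int_{RP_j} \circ \s_j = \s_{j+1}$ still holds. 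This handles both parities of $n$ uniformly (take $R = I_{rn}$ when $n$ is even) and needs no spare block; in particular it is cleaner when $r = 2$, where the paper's $W_d$ (which swaps the last two coordinates) actually lies inside the block carrying $A$ in $\s_2$ and so does not obviously commute with it. The observation that one can absorb a central sign on the relevant block to fix the determinant, at no cost to the conjugation identity, is the right one and gives a slightly more robust proof than the published one. The preliminary digressions in your write-up about cyclic three-block rotations and reflections $D$ are unnecessary and can be cut; the final paragraph stands alone.
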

\begin{proof}
The result is clear when $n=1$ or $r=1$, so let us assume that $n\geq2$ and $r\geq 2$.

We want to prove that $\s_{j+1}=c\,\circ\, \s_{j}$ for all $j=1,2, \dots, r-1$ where $c: \Or(rn) \to \Or(rn)$ is conjugation by a matrix in the connected component of the identity in $\Or(rn)$.

For $j=1,2, \dots, r-1$, let
\[
T=\begin{pmatrix}
0 & I_n\\
I_n & 0
\end{pmatrix}\in \Or(2n)%
\quad \text{and} \quad%
P_{j}=%
\begin{pmatrix}
I_{(j-1)n} & & \\
 & T & \\
 & & I_{(r-j-1)n}
\end{pmatrix} \in \Or(rn).
\]
Thus $P_{j}$ is the permutation matrix that interchanges the $j$-th and $(j+1)$-st
$n\times n$ diagonal blocks. In particular, for every $A\in \Or(n)$ we have
\[
P_{j}\,\s_{j}(A)\,P_{j}^{-1}=\s_{j+1}(A).
\]
Moreover, $\det(P_{j})=\det(T)=(-1)^n$.

If $n$ is even, then $\det(P_{j})=1$, so $P_{j}$ lies in the connected component of the identity in $\Or(rn)$. Therefore $\s_{j+1}$ is obtained from $\s_{j}$ by conjugation by $P_{j}\in \SO(rn)$, and the result follows from Lemma \ref{techl}.

%

Assume now that $n$ is odd, so $\det(P_{j})=-1$. Consider the block diagonal matrix
\[
D=\diag(-I_{n},\,I_{n},\,\dots,\,I_{n})\in \Or(rn),
\]
which satisfies $\det(D)=\det(-I_{n})=(-1)^n=-1$. For $C_{j}\coloneqq DP_{j}$ we have that
\[
\det(C_{j})=\det(D)\det(P_{j})=(-1)\cdot(-1)=1.
\]
Hence $C_{j}$ is in the identity component of $\Or(rn)$. Moreover,
\[
C_{j}\,\s_{j}(A)\,C_{j}^{-1}%
= D\bigl(P_{j}\,\s_{j}(A)\,P_{j}^{-1}\bigr)D^{-1}%
= D\,\s_{j+1}(A)\,D%
= \s_{j+1}(A).
\]
Thus $\s_{j+1}$ is obtained from $\s_{j}$ by conjugation by $C_{j}\in \SO(rn)$, and the result follows from Lemma \ref{techl}.
\end{proof}

\begin{notation}\label{Notationesta}
We call the $\s_{j}$ maps \textit{stabilization maps}. Since the first stabilization map $\s_{1}$ is equal to $\s:\Or(n) \to \Or(n+(r-1)n)$, it follows that each stabilization map $\s_{j}$ is $(n-1)$-connected for all $j=1,\dots,r$. By Lemma \ref{L:sjO}, the induced homomorphisms on homotopy groups by the componentwise stabilization maps are equal. Hence, we use $\esta$ to denote the common map
\[
\esta = \pi_{i}(\s_{1}) = \cdots = \pi_{i}(\s_{r}). 
\]
Thus, we identify $\pi_{i}(\Or(n))$ with $\pi_{i}(\Or(rn))$ for $i<n-1$ via $\esta$. This identification justifies a slight abuse of notation: we write $x = \esta(x)$ for $x\in \pi_{i}(\Or(n))$ and $i<n-1$.
\end{notation}

\subsection{Operations on homotopy groups}

We do not write proofs of some results in this section given that they are similar to the proofs of the results in \cite{TAA2021}.

\begin{proposition}\label{P:dsO}
\cite[Proposition 2.5]{TAA2021}
Let $i\in \NN$. The homomorphism $\oplus_{*}: \pi_{i}(\Or(m))\times \pi_{i}(\Or(n)) \rightarrow \pi_{i}(\Or(m+n))$ is equal to 
\[
\oplus_{*}(x,y)=\esta(x)+\esta(y) \quad\text{for}\quad x\in \pi_{i}(\Or(m)) \quad\text{and}\quad y\in\pi_{i}(\Or(n)).
\]
\end{proposition}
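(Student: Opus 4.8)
The plan is to reduce the statement about the direct-sum map $\oplus\colon \Or(m)\times\Or(n)\to\Or(m+n)$ to the two stabilization maps $\s_1\colon\Or(m)\to\Or(m+n)$ and $\s_2\colon\Or(n)\to\Or(m+n)$ (or, more precisely, the analogous pair where the two blocks sit in complementary positions), together with the additivity property of $H$-space multiplication on homotopy groups. First I would recall that $\Or(m+n)$ is a topological group, hence an $H$-space, so for two pointed maps $f,g\colon Z\to\Or(m+n)$ the pointwise product $f\cdot g$ induces $f_*+g_*$ on homotopy groups. Now observe that for $A\in\Or(m)$ and $B\in\Or(n)$ one has the factorization
\[
A\oplus B=\begin{pmatrix}A&0\\0&I_n\end{pmatrix}\begin{pmatrix}I_m&0\\0&B\end{pmatrix},
\]
i.e. $\oplus = \mu\circ(\widetilde{\s}_1\times\widetilde{\s}_2)$, where $\mu$ is the multiplication of $\Or(m+n)$, $\widetilde{\s}_1(A)=\diag(A,I_n)$, and $\widetilde{\s}_2(B)=\diag(I_m,B)$. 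Therefore on homotopy groups $\oplus_*(x,y)=(\widetilde{\s}_1)_*(x)+(\widetilde{\s}_2)_*(y)$ for $x\in\pi_i(\Or(m))$, $y\in\pi_i(\Or(n))$.

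Next I would identify each of $(\widetilde{\s}_1)_*$ and $(\widetilde{\s}_2)_*$ with $\esta$. The map $\widetilde{\s}_1=\s\colon\Or(m)\to\Or(m+n)$ is exactly the standard stabilization, so $(\widetilde{\s}_1)_*=\esta$ by definition of the notation. For $\widetilde{\s}_2\colon\Or(n)\to\Or(m+n)$, $B\mapsto\diag(I_m,B)$: this is the map that places an $\Or(n)$-block in the ``last'' position of $\Or(m+n)$, whereas the standard stabilization $\s\colon\Or(n)\to\Or(m+n)$ places it in the ``first'' position. These two differ by conjugation by a permutation matrix $P$ that swaps the first $n$ coordinates with the last $n$ coordinates (leaving the middle $m-n$ fixed if $m>n$, or a suitable block permutation in general), exactly as in the proof of Lemma \ref{L:sjO}. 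If $P$ can be chosen in $\SO(m+n)$ — which one arranges by composing with one of the auxiliary matrices $W_d$, $W_u$ as in Lemma \ref{L:sjO} when the naive permutation has determinant $-1$ — then Lemma \ref{techl} gives a basepoint-preserving homotopy through homomorphisms from this conjugation to the identity, so $\widetilde{\s}_2$ is pointed homotopic to $\s$, and hence $(\widetilde{\s}_2)_*=\esta$ as well.

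Combining the two displays yields $\oplus_*(x,y)=\esta(x)+\esta(y)$, which is the claim. The main obstacle — really the only place requiring care — is the parity bookkeeping in the previous paragraph: the permutation matrix realizing the conjugation that carries $\widetilde{\s}_2$ to $\s$ may lie outside $\SO(m+n)$, and one must massage it (by premultiplying with determinant-$(-1)$ reflections supported away from the moving block, as in Lemma \ref{L:sjO}) into the identity component before Lemma \ref{techl} applies. Since this is exactly the argument already carried out in Lemma \ref{L:sjO}, the proof is essentially a citation of that lemma plus the $H$-space additivity, which is why — as the paper notes — the details parallel \cite{TAA2021} and need not be written out in full.
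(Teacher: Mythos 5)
Your proposal is correct and follows exactly the approach the paper has in mind: the paper omits the proof of Proposition~\ref{P:dsO}, citing \cite{TAA2021}, but the tools you invoke (the factorization $A\oplus B=\diag(A,I_n)\cdot\diag(I_m,B)$, the Eckmann--Hilton observation that $\Or(m+n)$ is a topological group so pointwise products add on $\pi_i$, and Lemma~\ref{techl} plus the $W_d,W_u$ determinant fix from Lemma~\ref{L:sjO} to conjugate $\widetilde{\s}_2$ into $\s$ inside $\SO(m+n)$) are precisely the ones the paper uses in its nearby explicit proofs such as Lemma~\ref{L:RLtpO}. Your careful attention to the parity issue, namely that the block permutation $\begin{pmatrix}0&I_m\\ I_n&0\end{pmatrix}$ has determinant $(-1)^{mn}$ and therefore lies outside $\SO(m+n)$ exactly when $m$ and $n$ are both odd, is the one place where the orthogonal case genuinely differs from the unitary case of \cite{TAA2021}, and you handle it correctly by pre-composing with a reflection supported in the block complementary to the moving one.
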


\begin{corollary}\label{C:dsO}
\cite[Corollary 2.6]{TAA2021}
If $m<n$ and $i<m-1$, then $\oplus_{*}(x,y)=x+y$ for $x\in \pi_{i}(\Or(m))$ and $y\in\pi_{i}(\Or(n))$.
\end{corollary}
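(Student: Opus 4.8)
The plan is to deduce this directly from Proposition \ref{P:dsO} together with the stabilization conventions set up above. First I would invoke Proposition \ref{P:dsO}, which gives
\[
\oplus_{*}(x,y)=\esta(x)+\esta(y)
\]
inside $\pi_{i}(\Or(m+n))$, for any $x\in\pi_{i}(\Or(m))$ and $y\in\pi_{i}(\Or(n))$, with no restriction on $i$.

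Next I would observe that the hypotheses $m<n$ and $i<m-1$ force $i<m-1$ and $i<n-1$ simultaneously. By the paragraph preceding Corollary \ref{C:dsO} (and the fact that each $\s_{j}$, in particular $\s_{1}=\s$, is $(m-1)$-connected, respectively $(n-1)$-connected), the stabilization homomorphism $\esta$ is an isomorphism $\pi_{i}(\Or(m))\xrightarrow{\iso}\pi_{i}(\Or(m+n))$ and likewise $\pi_{i}(\Or(n))\xrightarrow{\iso}\pi_{i}(\Or(m+n))$. Under the standing identification we may therefore write $x$ for $\esta(x)$ and $y$ for $\esta(y)$, so that $\esta(x)+\esta(y)=x+y$, which yields the claim.

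There is essentially no obstacle here: the content is entirely contained in Proposition \ref{P:dsO}, and the only thing to check is that the connectivity bookkeeping is consistent, i.e.\ that $i<m-1$ together with $m<n$ places $i$ in the stable range for both $\Or(m)$ and $\Or(n)$, which is immediate. The one point worth stating carefully is that the sum $x+y$ on the right-hand side is taken in the common group $\pi_{i}(\Or(m+n))$ via these identifications, so that the equality is meaningful; this is exactly the abuse of notation introduced in the stabilization notation above.
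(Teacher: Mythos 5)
Your proposal is correct and is precisely the intended argument: specialize Proposition \ref{P:dsO} to the range $i<m-1<n-1$, where $\esta$ is an isomorphism on both factors, and then apply the standing identification $x\leftrightarrow\esta(x)$ to rewrite $\esta(x)+\esta(y)$ as $x+y$ in $\pi_{i}(\Or(m+n))$. The paper simply cites this from \cite[Corollary 2.6]{TAA2021} without reproducing the proof, and your connectivity bookkeeping matches the stabilization conventions established in the section.
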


\begin{proposition}\label{P:rdsO}
\cite[Proposition 2.7]{TAA2021}
Let $i\in \NN$. The homomorphism $\oplus^{r}_{*}:\pi_{i}(\Or(n))\rightarrow \pi_{i}(\Or(rn))$ is equal to
\[
\oplus^{r}_{*}(x)=r\esta(x) \quad\text{for}\quad x\in \pi_{i}(\Or(n)).
\]
\end{proposition}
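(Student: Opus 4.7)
The plan is to factor the $r$-fold direct sum through the multiplication map of $\Or(rn)$. For $A\in \Or(n)$, the matrices $\s_1(A),\s_2(A),\dots,\s_r(A)$ are block-diagonal with $A$ sitting in a single block and identities in the other $r-1$ blocks; different blocks commute, so these $r$ matrices pairwise commute in $\Or(rn)$, and their product is exactly $\diag(A,\dots,A)=\oplus^{r}(A)$. In other words, $\oplus^{r}$ factors as the composition
\begin{equation*}
\begin{tikzcd}
\Or(n) \arrow[r,"\Delta_r"] & \Or(n)^{r} \arrow[rr,"\s_1\times\cdots\times\s_r"] & & \Or(rn)^{r} \arrow[r,"\mu_r"] & \Or(rn),
\end{tikzcd}
\end{equation*}
where $\Delta_r$ is the iterated diagonal and $\mu_r$ is the iterated group multiplication.

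Next I would pass to homotopy groups. For any topological group $G$, the multiplication induces addition on $\pi_{i}(G)$, so the iterated multiplication $\mu_r$ induces $(x_1,\dots,x_r)\mapsto x_1+\cdots+x_r$, while $\Delta_r$ induces $x\mapsto(x,\dots,x)$. Composing, we obtain
\[
\oplus^{r}_{*}(x) \;=\; (\s_1)_*(x) + (\s_2)_*(x) + \cdots + (\s_r)_*(x)
\]
for all $x\in\pi_i(\Or(n))$. Finally, Lemma \ref{L:sjO} supplies pointed homotopies $\s_j\simeq\s_{j+1}$, so each $(\s_j)_*$ equals $\esta$, and the sum collapses to $r\,\esta(x)$.

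The main obstacle is essentially bookkeeping: one needs to verify that the block-diagonal images of $\s_j$ commute pairwise (so that the factorization through $\mu_r$ is well-defined) and to justify the fact that multiplication in a topological group induces addition on $\pi_i$ — the latter follows by comparing the group product with the standard H-space product coming from concatenation of loops via the Eckmann–Hilton argument. Both are routine, which is consistent with the author noting that the proof mirrors that of \cite[Proposition 2.7]{TAA2021}.
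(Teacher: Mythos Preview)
Your argument is correct: factoring $\oplus^{r}$ as $\mu_{r}\circ(\s_{1}\times\cdots\times\s_{r})\circ\Delta_{r}$, using Eckmann--Hilton to turn $\mu_{r}$ into addition on $\pi_{i}$, and then invoking Lemma~\ref{L:sjO} to identify each $(\s_{j})_{*}$ with $\esta$ gives exactly $r\,\esta(x)$. The paper itself omits the proof, deferring to \cite[Proposition~2.7]{TAA2021} with the remark that the arguments are analogous; your write-up is precisely the expected analogue, using the stabilization lemmas the paper has already set up for $\Or(n)$.
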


\begin{corollary}\label{C:rdsO}
\cite[Corollary 2.8]{TAA2021}
If $i<n-1$, then $\oplus_{*}^{r}(x)=rx$ for $x\in \pi_{i}(\Or(n))$.
\end{corollary}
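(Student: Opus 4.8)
The plan is to deduce this immediately from Proposition \ref{P:rdsO} together with the stabilization identification. Proposition \ref{P:rdsO} already tells us that $\oplus^{r}_{*}(x)=r\,\esta(x)$ for every $x\in\pi_{i}(\Or(n))$ and every $i\in\NN$, so the only remaining point is to understand the term $\esta(x)$ in the restricted range $i<n-1$.

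First I would recall that the stabilization map $\s\colon\Or(n)\to\Or(rn)$ (equivalently $\s_{1}$, which by Lemma \ref{L:sjO} is pointed homotopic to each $\s_{j}$) is $(n-1)$-connected, being a composite of consecutive canonical inclusions each of which is highly connected. Hence the induced homomorphism $\esta\colon\pi_{i}(\Or(n))\to\pi_{i}(\Or(rn))$ is an isomorphism whenever $i<n-1$. Under the convention fixed in the Notation block following Lemma \ref{L:sjO}, we identify $x$ with $\esta(x)$ in precisely this range, so $\esta(x)=x$ as elements of $\pi_{i}(\Or(rn))$.

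Substituting this into the formula from Proposition \ref{P:rdsO} gives $\oplus^{r}_{*}(x)=r\,\esta(x)=rx$ for $x\in\pi_{i}(\Or(n))$ with $i<n-1$, which is the claim. There is no real obstacle here: the statement is a bookkeeping consequence of the previously established proposition once the identification $\esta(x)=x$ is invoked, and the connectivity of $\s$ is exactly what licenses that identification.
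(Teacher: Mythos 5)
Your proposal is correct and is precisely the intended argument: the paper deliberately leaves this as an immediate corollary of Proposition~\ref{P:rdsO}, and the only content is the substitution $\esta(x)=x$ in the range $i<n-1$, licensed by the $(n-1)$-connectivity of $\s$ and the notational identification fixed after Lemma~\ref{L:sjO}. Nothing is missing.
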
 

\begin{lemma}\label{L:RLtpO}
Let $L, R: \Or(m)\rightarrow \Or(mn)$ be the homomorphisms $L(A)=A\tensor I_{n}$ and $R(A)=I_{n}\tensor A$. There is a basepoint preserving homotopy from $L$  to $R$. 
\end{lemma}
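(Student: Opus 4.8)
The plan is to exhibit an explicit path of orthogonal matrices conjugating $L$ to $R$ and then invoke Lemma~\ref{techl}. The key observation is that $A \tensor I_n$ and $I_n \tensor A$ differ by the standard ``tensor swap'' permutation: if $W \in \fGL{mn}$ is the matrix of the linear isomorphism $\CC^m \tensor \CC^n \to \CC^n \tensor \CC^m$ (reindexed back to $\CC^{mn}$ via the lexicographic identifications), then $W(A \tensor I_n)W^{-1} = I_n \tensor A$ for all $A$. So $R = \Int_W \circ L$, and the whole problem reduces to checking that $W$ can be taken in the identity component $\SO(mn)$ and is itself orthogonal, so that Lemma~\ref{techl} applies with $G = \fO{mn}$ and $P = W$.

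The steps, in order, would be: (1) Fix the identification $\CC^{mn} \iso \CC^m \tensor \CC^n$ and write $W$ as the permutation matrix realizing the transposition of tensor factors; verify directly from $(A \tensor I_n)(e_i \tensor e_j) = (Ae_i)\tensor e_j$ that conjugation by $W$ interchanges the two embeddings. (2) Check $W \in \fO{mn}$: a permutation matrix is real orthogonal, hence lies in $\fO{mn}$ in the sense of this paper (it satisfies $W^{\tr}W = I$), and moreover it is compatible with the standard symmetric bilinear form $B \tensor B'$, which is exactly the standard form on $\CC^{mn}$ under our identification. (3) Check $\det(W) = 1$, i.e. $W \in \SO(mn)$: the sign of the tensor-swap permutation is $(-1)^{\binom{m}{2}\binom{n}{2}}$ or a similar closed form; when it happens to be $-1$, correct $W$ by post-composing with a fixed reflection matrix $W' \in \fO{mn}$ that commutes with every $A \tensor I_n$ and $I_n \tensor A$ (for instance, a reflection supported on two coordinates both of which $W$ already swaps, exactly as in the $W_d, W_u$ trick of Lemma~\ref{L:sjO}), so that $\Int_{W'W} = \Int_W$ on the image but $\det(W'W) = 1$. (4) Apply Lemma~\ref{techl} to $G = \fO{mn}$, $G_0 = \SO(mn)$, and $P = W$ (or $W'W$): since $\SO(mn)$ is path-connected there is a basepoint-preserving homotopy $H$ of homomorphisms from $\Int_P$ to $\id$, and then $H(-,t) \circ L$ is the desired homotopy of homomorphisms from $R = \Int_P \circ L$ to $L$.

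The main obstacle I expect is the determinant bookkeeping in step~(3): one must pin down the parity of the tensor-swap permutation as a function of $m$ and $n$ and, in the bad-parity case, produce a correcting element that both lies in $\fO{mn}$ and genuinely commutes with the entire images of $L$ and $R$ (not just with a single matrix), so that replacing $W$ by the corrected matrix does not change the conjugation map on the relevant subgroup. The cleanest route is probably to avoid computing the sign altogether: instead, note that $W$ can be written as a product of disjoint transpositions of coordinate pairs, group them so that an even number remain, or absorb a leftover transposition into a coordinate reflection (swap plus sign change on the same two coordinates is in $\SO$ and still conjugates $L$ to $R$ after a harmless adjustment), mirroring precisely the $W_d$/$W_u$ device already used in the proof of Lemma~\ref{L:sjO}. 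Everything else is routine: the homotopy is produced mechanically by Lemma~\ref{techl} once the conjugating element is placed in the identity component.
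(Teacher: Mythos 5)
Your plan is essentially the same as the paper's up to and including the identification of the determinant obstruction in step~(3): the paper also conjugates $R$ to $L$ by the tensor-swap permutation (called $P_{m,n}$ there) and reduces to Lemma~\ref{techl}. But your proposed fix for the bad-parity case does not work. You want a reflection $W'\in\Or(mn)$ that commutes with both $L(\Or(m))$ and $R(\Or(m))$. Since $\CC^{m}$ is an irreducible $\Or(m)$-module for $m\geq 2$, the centralizer of $L(\Or(m))$ in $\fM{mn}$ is $\bigl\{I_{m}\tensor B : B\in\fM{n}\bigr\}$ and that of $R(\Or(m))$ is $\bigl\{C\tensor I_{m} : C\in\fM{n}\bigr\}$; their intersection is just the scalars $\CC I_{mn}$. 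So the only candidate for $W'$ is $\pm I_{mn}$, whose determinant $(-1)^{mn}$ equals $+1$ whenever $mn$ is even. The sign of the tensor-swap permutation is $(-1)^{\binom{m}{2}\binom{n}{2}}$, which is $-1$ exactly when $m,n\equiv 2,3\pmod 4$; this includes cases with $mn$ even (e.g.\ $m=2,\ n=3$ or $m=6,\ n=7$), where no scalar correction exists. Relaxing to a $W'$ that centralizes only one of the two images forces $W'=I_{m}\tensor B$ or $W'=C\tensor I_{m}$ with $B,C\in\Or(n)$, whose determinants are perfect $m$-th powers and hence again $+1$ when $m$ is even. The ``swap plus sign change'' you mention does lie in $\SO(mn)$, but a single diagonal sign flip is never of the form $I_m\tensor B$ for $m\geq2$, so it destroys the conjugation relation $W'L(A)(W')^{-1}=L(A)$; the ``harmless adjustment'' you invoke is exactly the missing content.

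The paper's actual device is to abandon the search for a single corrected conjugator and instead split the conjugation in two. It factors $R(A)=\s_{1}(A)\cdots\s_{n}(A)$ using the stabilization maps of Lemma~\ref{L:sjO}, observes that $W_{d}$ (swap of the last two coordinates) centralizes $\s_{1}(A)\cdots\s_{n-1}(A)$ while $W_{u}$ (swap of the first two coordinates) centralizes $\s_{n}(A)$, and rewrites
\[
L(A)=(P_{m,n}W_{d})\,\s_{1}(A)\cdots\s_{n-1}(A)\,(P_{m,n}W_{d})^{-1}\,(P_{m,n}W_{u})\,\s_{n}(A)\,(P_{m,n}W_{u})^{-1},
\]
where now $P_{m,n}W_{d},\ P_{m,n}W_{u}\in\SO(mn)$. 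Applying Lemma~\ref{techl} to each conjugation and taking the pointwise product of the two resulting homotopies yields a basepoint-preserving homotopy through homomorphisms from $L$ to $R$. That two-conjugator factorization is the idea your proof proposal lacks.
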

\begin{proof}
If $m=1$ or $n=1$, then $L=R$ and there is nothing to prove. Hence assume
$m,n\ge 2$.  Let $A=(a_{ij})\in \Or(m)$.  Then
\[
L(A)=
\begin{pmatrix}
a_{11}I_n & \cdots & a_{1m}I_n\\
\vdots    & \ddots & \vdots\\
a_{m1}I_n & \cdots & a_{mm}I_n
\end{pmatrix}%
\quad \text{and} \quad
R(A)=
\begin{pmatrix}
A&  \cdots &  0\\
\vdots&  \ddots &  \vdots\\
0&  \cdots &  A
\end{pmatrix}=A^{\oplus n}= \s_{1}(A)\s_{2}(A)\cdots \s_{n}(A),
\]
where $\s_{j}$ are the componentwise stabilization maps from Lemma \ref{L:sjO} for all $1\leq j\leq n$. 

Let $P_{m,n}$ be the permutation matrix
\begin{align*}
P_{m,n}=[&e_{1},e_{n+1},e_{2n+1},e_{3n+1},\dots, e_{(m-1)n+1},\\
		      &e_{2},e_{n+2},e_{2n+2},e_{3n+2},\dots, e_{(m-1)n+2},\\
		      &\dots,\\
		      &e_{n-1},e_{2n-1},e_{3n-1},e_{4n-1},\dots,e_{mn-1},\\ 
		      &e_{n},e_{2n},e_{3n},e_{4n},\dots, e_{(m-1)n},e_{mn}]
\end{align*}
where $e_{i}$ is the $i$-th standard basis vector of $\CC^{mn}$ written as a column vector.
Then $P_{m,n}$ is orthogonal and satisfies the standard identity $P_{m,n}\,(I_{n}\otimes A)\,P_{m,n}^{-1}=A\otimes I_{n}$, so in particular
\[
L(A)=P_{m,n}\,R(A)\,P_{m,n}^{-1}.
\]

If $\det(P_{m,n})=1$, the result follows from Lemma \ref{techl}. Assume now that $\det(P_{m,n})=-1$. 
Let $D,U\in \Or(rn)$ be the permutation matrices
\[
D=
\begin{pmatrix}
I_{rn-2} & & \\
 & 0 & 1 \\
 & 1 & 0
\end{pmatrix}%
\quad \text{and} \quad%
U=
\begin{pmatrix}
0 & 1 &  \\
1 & 0 &  \\
  & & I_{rn-2}
\end{pmatrix}.
\]
Then $\det(D)=\det(U)=-1$, and hence $\det(P_{m,n}D)=\det(P_{m,n}U)=\det(D^{-1}U)=1$, so $P_{m,n}D$ and $P_{m,n}U$ lie in the identity component of $\Or(mn)$.

By construction, $U$ acts only on the first two coordinates (in the first block); similarly, $D$ acts only on the last two coordinates (which lie in the $n$-th block).  Thus
\[
R(A)=\Bigl(D \,\s_{1}(A)\cdots\s_{n-1}(A)\, D^{-1}\Bigr)\Bigl(U \s_{n}(A) U^{-1}\Bigr).
\]
Conjugating by $P_{m,n}$ and regrouping gives
\begin{align*}
L(A)&=P_{m,n}\,R(A)\,P_{m,n}^{-1}\\[2mm]
&=P_{m,n}\Bigl(D \,\s_{1}(A)\cdots\s_{n-1}(A)\, D^{-1}\Bigr)\Bigl(U \s_{n}(A) U^{-1}\Bigr)P_{m,n}^{-1}\\[2mm]
&=(P_{m,n}D)\s_{1}(A)\cdots\s_{n-1}(A)(P_{m,n}D)^{-1} (P_{m,n}U)\s_{n}(A)(P_{m,n}U)^{-1}.
\end{align*}
Lemma \ref{techl} provides pointed homotopies from each corresponding conjugation to the identity. Applying these to the two factors and using continuity of multiplication yields a basepoint-preserving homotopy from $L$ to $R$.
\end{proof}

\begin{proposition}\label{P:tpO}
\cite[Proposition 2.10]{TAA2021}
Let $i\in \NN$, the homomorphism $\otimes_{*}:\pi_{i}(\Or(m))\times\pi_{i}(\Or(n)) \to \pi_{i}(\Or(mn))$ is given by 
\[
\otimes_{*}(x,y)=n\esta(x)+m\esta(y) \quad\text{for}\quad x \in \pi_{i}(\Or(m))  \quad\text{and}\quad y\in \pi_{i}(\Or(n)).
\]
\end{proposition}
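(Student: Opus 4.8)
The plan is to reduce the two–variable tensor product map $\otimes\colon \Or(m)\times\Or(n)\to\Or(mn)$ to a statement about single–variable maps, using the fact that homotopy groups of topological groups are abelian and that $\otimes$ restricts nicely on each factor. Concretely, write $\iota_{1}\colon\Or(m)\hookrightarrow\Or(m)\times\Or(n)$ and $\iota_{2}\colon\Or(n)\hookrightarrow\Or(m)\times\Or(n)$ for the inclusions of the two factors at the identity. Since $\pi_{i}$ of a product is the product of the $\pi_{i}$'s and the group structure makes the Eckmann--Hilton argument available, any element $(x,y)\in\pi_{i}(\Or(m))\times\pi_{i}(\Or(n))$ satisfies $(x,y)=\iota_{1*}(x)\cdot\iota_{2*}(y)$, so $\otimes_{*}(x,y)=(\otimes\circ\iota_{1})_{*}(x)+(\otimes\circ\iota_{2})_{*}(y)$. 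Thus it suffices to identify the two composites $\otimes\circ\iota_{1}\colon\Or(m)\to\Or(mn)$ and $\otimes\circ\iota_{2}\colon\Or(n)\to\Or(mn)$ on homotopy groups.

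Next I would observe that $\otimes\circ\iota_{1}$ is exactly the map $A\mapsto A\otimes I_{n}$, i.e.\ the map $L$ of Lemma \ref{L:RLtpO} with $\Or(m)$ as source, while $\otimes\circ\iota_{2}$ is the map $B\mapsto I_{m}\otimes B$. By Lemma \ref{L:RLtpO}, $L$ is pointed homotopic through homomorphisms to $R\colon A\mapsto I_{n}\otimes A = A^{\oplus n}$, which is precisely the $n$-fold direct sum $\oplus^{n}$ on $\Or(m)$ followed by the inclusion into $\Or(mn)$. Hence by Proposition \ref{P:rdsO}, $(\otimes\circ\iota_{1})_{*}(x)=\oplus^{n}_{*}(x)=n\,\esta(x)$. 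Symmetrically, $B\mapsto I_{m}\otimes B$ is, after conjugating by the shuffle permutation, the $m$-fold block-diagonal embedding $B\mapsto B^{\oplus m}$ of $\Or(n)$ into $\Or(mn)$; the same conjugation/parity argument as in the proof of Lemma \ref{L:RLtpO} (applying Lemma \ref{techl}, with the $W_{d}, W_{u}$ correction when the relevant permutation has determinant $-1$) shows it is pointed homotopic through homomorphisms to $B\mapsto B^{\oplus m}$, so that $(\otimes\circ\iota_{2})_{*}(y)=\oplus^{m}_{*}(y)=m\,\esta(y)$ by Proposition \ref{P:rdsO} again. Adding the two contributions gives $\otimes_{*}(x,y)=n\,\esta(x)+m\,\esta(y)$.

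The step I expect to need the most care is the reduction $\otimes_{*}(x,y)=(\otimes\circ\iota_{1})_{*}(x)+(\otimes\circ\iota_{2})_{*}(y)$: one must check that the images of $\iota_{1}$ and $\iota_{2}$ commute inside $\Or(mn)$ under $\otimes$ — which is immediate since $(A\otimes I_{n})(I_{m}\otimes B)=A\otimes B=(I_{m}\otimes B)(A\otimes I_{n})$ — and then invoke the standard fact that for commuting maps into a topological group the induced map on $\pi_{i}$ is the sum of the induced maps (equivalently, $\otimes$ is homotopic as a map of pointed spaces to the pointwise product of the two one-variable maps). Since this is essentially the argument of \cite[Proposition 2.10]{TAA2021}, I would cite that reference for the details; the only new content relative to that source is bookkeeping the factors $n$ and $m$ correctly, which is exactly what Lemma \ref{L:RLtpO} and Proposition \ref{P:rdsO} supply.
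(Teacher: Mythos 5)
Your proof is correct and follows essentially the same route the paper intends: decompose $\otimes$ into the two commuting one-variable homomorphisms via the Eckmann--Hilton/H-space argument, identify $\otimes\circ\iota_{1}=L$ with $\oplus^{n}$ via Lemma~\ref{L:RLtpO}, and identify $\otimes\circ\iota_{2}$ with $\oplus^{m}$, then apply Proposition~\ref{P:rdsO}. One small redundancy: $B\mapsto I_{m}\otimes B$ is \emph{literally} $B\mapsto B^{\oplus m}$ under the usual Kronecker convention, so no shuffle conjugation is needed for the second factor --- the shuffle/Lemma~\ref{L:RLtpO} step is only required for $A\mapsto A\otimes I_{n}$, which is not block-diagonal.
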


\begin{corollary}\label{C:tpO}
\cite[Corollary 2.11]{TAA2021}
If $m<n$ and $i<m-1$, then $\otimes_{*}(x,y)=nx+my$ for $x \in \pi_{i}(\Or(m))$ and $y\in \pi_{i}(\Or(n))$.
\end{corollary}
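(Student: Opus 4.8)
The plan is to obtain Corollary \ref{C:tpO} as an immediate consequence of Proposition \ref{P:tpO}: the only extra input is that, under the stated hypotheses, the two stabilization maps occurring in that formula induce isomorphisms on $\pi_i$ and so disappear under the standing abuse of notation.

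First I would recall that Proposition \ref{P:tpO} expresses $\otimes_{*}(x,y)\in\pi_i(\Or(mn))$ as $n\esta(x)+m\esta(y)$, where the first $\esta$ is induced by a stabilization map $\Or(m)\to\Or(mn)$ and the second by a stabilization map $\Or(n)\to\Or(mn)$. By the connectivity of the stabilization maps recorded after Lemma \ref{L:sjO}, these maps are $(m-1)$-connected and $(n-1)$-connected respectively, so $\esta\colon\pi_i(\Or(m))\to\pi_i(\Or(mn))$ is an isomorphism for $i<m-1$ and $\esta\colon\pi_i(\Or(n))\to\pi_i(\Or(mn))$ is an isomorphism for $i<n-1$. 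Since $m<n$, the hypothesis $i<m-1$ forces $i<n-1$ as well, so both stabilizations are isomorphisms in the relevant degree; identifying $x$ with $\esta(x)$ and $y$ with $\esta(y)$ as agreed, the formula of Proposition \ref{P:tpO} collapses to $\otimes_{*}(x,y)=nx+my$, which is the assertion.

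I do not expect any genuine obstacle here: all of the mathematical content sits in Proposition \ref{P:tpO} (and, beneath it, in Lemma \ref{L:RLtpO} together with the direct-sum computations of Propositions \ref{P:dsO} and \ref{P:rdsO}, which follow \cite{TAA2021}). The single point that warrants a line of justification is that one must be in the stable range on \emph{both} factors $\Or(m)$ and $\Or(n)$, not just on $\Or(m)$; this is precisely why the hypothesis is phrased as $i<m-1$ with $m<n$ --- equivalently $i<\min\{m,n\}-1$ --- so that the $(n-1)$-connectivity of the second stabilization map also applies and the identification is legitimate on the $\Or(n)$ factor.
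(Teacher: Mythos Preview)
Your argument is correct and is exactly the intended derivation: the paper does not spell out a proof here but simply cites \cite[Corollary 2.11]{TAA2021}, and the passage from Proposition \ref{P:tpO} to Corollary \ref{C:tpO} is precisely the stable-range identification you describe, parallel to how Corollary \ref{C:dsO} follows from Proposition \ref{P:dsO}.
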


\begin{remark}
Under the hypothesis of Corollary \ref{C:tpO}, the homomorphism 
\[
\tensor_{*}:\pi_{m-1}(\Or(m))\times\pi_{m-1}(\Or(n)) \to \pi_{m-1}(\Or(mn))
\]
is given by $\tensor_{*}(x,y)=n\esta(x)+m\esta(y)= n\esta(x)+my$. This can be seen by observing that $\otimes_{*}$ is equal to the sum of the two paths around diagram \eqref{cd:stabtensor}.

\begin{equation}\label{cd:stabtensor}
\begin{tikzcd}[row sep=large,column sep=scriptsize]
&\pi_{m-1}(\Or(m))\times\pi_{m-1}(\Or(n)) \arrow[dr,twoheadrightarrow,"\proj_{2}"] & \\
\pi_{m-1}(\Or(m)) \arrow[ru,twoheadleftarrow,"\proj_{1}"] \arrow[d,twoheadrightarrow,"\esta"] & &\pi_{m-1}(\Or(n)) \arrow[d,rightarrow,"\esta"]\\
\pi_{m-1}(\Or(mn)) & & \pi_{m-1}(\Or(mn)) \arrow[dl,rightarrow,"\times m"] \arrow[u,leftarrow,"\iso"]\\
& \pi_{m-1}(\Or(mn))\times \pi_{m-1}(\Or(mn)) \arrow[ul,leftarrow,"\times n"] \arrow[d,rightarrow,"+"]  &\\
&\pi_{m-1}(\Or(mn))& 
\end{tikzcd}
\end{equation}
\end{remark}

\begin{proposition}\label{P:rtpO}
\cite[Proposition 2.12]{TAA2021}
Let $i\in \NN$. The homomorphism $\otimes^{r}_{*}:\pi_{i}(\Or(n)) \rightarrow \pi_{i}(\Or(n^{r}))$ is given by 
\[
\otimes^{r}_{*}(x)=rn^{r-1}\esta(x) \quad\text{for}\quad x \in \pi_{i}(\Or(n)).
\]
\end{proposition}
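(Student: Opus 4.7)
The plan is to decompose the $r$-fold tensor product homomorphism $\tensor^{r}:\Or(n)\to \Or(n^{r})$, $A\mapsto A^{\tensor r}$, as a pointwise product in $\Or(n^{r})$ of $r$ ``slot-insertion'' homomorphisms, convert this into a sum on $\pi_{i}$ using the $H$-space structure of $\Or(n^{r})$, and then show that all the slot insertions induce the same homomorphism on $\pi_{i}$.

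For each $j\in\{1,\dots,r\}$, let $T_{j}:\Or(n)\to \Or(n^{r})$ be the homomorphism $T_{j}(A)=I_{n^{j-1}}\tensor A\tensor I_{n^{r-j}}$. Applying the mixed-product identity $(X\tensor Y)(X'\tensor Y')=XX'\tensor YY'$ iteratively gives
\[
A^{\tensor r} = T_{1}(A)\,T_{2}(A)\cdots T_{r}(A),
\]
so $\tensor^{r}$ is the pointwise product of the $T_{j}$ in the topological group $\Or(n^{r})$. Since group multiplication endows $\Or(n^{r})$ with an $H$-space structure whose induced addition on $\pi_{i}$ coincides with the standard one, it follows that $\tensor^{r}_{*}(x)=\sum_{j=1}^{r}T_{j*}(x)$.

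To compute $T_{j*}$, factor $T_{j}$ as
\[
\Or(n)\xrightarrow{\;L\;}\Or(n^{r-j+1})\xrightarrow{\;S\;}\Or(n^{r}),
\]
where $L(A)=A\tensor I_{n^{r-j}}$ and $S(M)=I_{n^{j-1}}\tensor M = M^{\oplus n^{j-1}}$. Lemma \ref{L:RLtpO}, with its $m$ and $n$ replaced by $n$ and $n^{r-j}$, shows that $L$ is pointed homotopic through homomorphisms to $A\mapsto I_{n^{r-j}}\tensor A = A^{\oplus n^{r-j}}$, so Proposition \ref{P:rdsO} gives $L_{*}(x)=n^{r-j}\esta(x)$. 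The map $S$ is literally the $n^{j-1}$-fold direct sum, so Proposition \ref{P:rdsO} gives $S_{*}(y)=n^{j-1}\esta(y)$. Since $\esta$ is a group homomorphism and the composition of two successive stabilizations agrees with the direct stabilization $\esta:\pi_{i}(\Or(n))\to\pi_{i}(\Or(n^{r}))$, we conclude
\[
T_{j*}(x) = S_{*}(L_{*}(x)) = n^{j-1}\esta\bigl(n^{r-j}\esta(x)\bigr) = n^{r-1}\esta(x),
\]
independent of $j$. Summing over $j\in\{1,\dots,r\}$ gives $\tensor^{r}_{*}(x)=r\,n^{r-1}\esta(x)$.

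The only subtle points are the $H$-space identification that converts the pointwise product into a sum on $\pi_{i}$, and the verification that the two intermediate stabilizations appearing in the factorization of $T_{j}$ compose to the single stabilization $\esta$; both are standard, so no genuine obstacle arises. The argument mirrors the proof of \cite[Proposition 2.12]{TAA2021}.
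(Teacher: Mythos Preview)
The paper does not actually supply a proof of this proposition; it merely cites \cite[Proposition~2.12]{TAA2021} and explicitly says the argument is omitted because it is analogous to that reference. Your proof is correct and, as you yourself note, follows exactly the expected approach: write $A^{\tensor r}=T_{1}(A)\cdots T_{r}(A)$, convert the product to a sum on $\pi_{i}$ via the $H$-space structure, and use Lemma~\ref{L:RLtpO} together with Proposition~\ref{P:rdsO} to identify each $T_{j*}$ with $n^{r-1}\esta$.
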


\begin{corollary}\label{C:rtpO}
\cite[Corollary 2.13]{TAA2021}
If $i<n-1$, the map $\otimes^{r}_{*}(x)=rn^{r-1}x$ for $x \in \pi_{i}(\Or(n))$.
\end{corollary}

\subsection{Tensor product on the quotient}

We want to describe the effect of the tensor product operation on the homotopy groups of the projective complex orthogonal group. 

The methods used in \cite{TAA2021} to establish decomposition of topological Azumaya algebras apply to those whose degrees are relatively prime. For this reason, we study the tensor product 
\begin{equation}\label{qO}
\begin{tikzcd}
\otimes:\PO(m)\times \PO(n) \arrow[r] & \PO(mn)
\end{tikzcd}
\end{equation}
in two cases: when $m$ and $n$ are odd, and when $m$ is even and $n$ is odd.

\subsubsection{Case $m$ and $n$ odd}
Let $m$ and $n$ be positive integers such that $m$ and $n$ are odd. Since $\PO(m)\times\PO(n)=\SO(m)\times\SO(n)$, the tensor product in \eqref{qO} may be written as 
\begin{equation}\label{qOo}
\begin{tikzcd}
\otimes:\SO(m)\times \SO(n) \arrow[r] & \SO(mn),
\end{tikzcd}
\end{equation}
and by Proposition \ref{P:tpO} we have

\begin{proposition}\label{P:tppiiOo}
Let $i\in \NN$. The homomorphism 
\begin{equation*}
\begin{tikzcd}
\otimes_{*}: \pi_{i}(\SO(m))\times\pi_{i}(\SO(n)) \arrow[r] & \pi_{i}(\SO(mn))
\end{tikzcd}
\end{equation*}
is given by $\otimes_{*}(x,y)=n\esta(x)+m\esta(y)$
for $x \in \pi_{i}(\SO(m))$ and $y\in \pi_{i}(\SO(n))$.
\end{proposition}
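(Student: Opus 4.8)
The plan is to reduce Proposition \ref{P:tppiiOo} to Proposition \ref{P:tpO} by observing that, for $m$ and $n$ odd, the projective orthogonal groups collapse onto the special orthogonal groups. Concretely, by the discussion following diagram \eqref{cd:Oodd} we have $\PO(2k-1)\iso\PSO(2k-1)\iso\SO(2k-1)$ for all $k\geq 2$, and trivially for $k=1$; hence $\PO(m)=\SO(m)$, $\PO(n)=\SO(n)$, and $\PO(mn)=\SO(mn)$ as topological groups (note $mn$ is again odd). First I would make explicit that under these identifications the tensor product map $\otimes:\PO(m)\times\PO(n)\to\PO(mn)$ of \eqref{qO} is literally the same continuous map as $\otimes:\SO(m)\times\SO(n)\to\SO(mn)$ of \eqref{qOo}: both are induced fiberwise by the Kronecker product of matrices, and the quotient maps $\Or(\ell)\twoheadrightarrow\PO(\ell)$ restrict to isomorphisms $\SO(\ell)\xrightarrow{\iso}\PO(\ell)$ in the odd case because $\{\pm I_\ell\}\cap\SO(\ell)=\{I_\ell\}$ while $\SO(\ell)$ already surjects onto $\PO(\ell)$.

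Once this identification is in place, the homotopy-group statement is immediate. Applying $\pi_i(-)$ to the equality of maps gives that $\otimes_*:\pi_i(\SO(m))\times\pi_i(\SO(n))\to\pi_i(\SO(mn))$ agrees with the corresponding homomorphism for the orthogonal groups after composing with the isomorphisms $\pi_i(\SO(\ell))\iso\pi_i(\Or(\ell))$ for $i\geq 1$ (from the fiber sequence $\SO(\ell)\hookrightarrow\Or(\ell)\xrightarrow{\det}\ZZ/2$) and $\pi_0(\SO(\ell))=0$. Since the tensor product homomorphism $\tensor:\fO{m}\times\fO{n}\to\fO{mn}$ restricts to the one on the special orthogonal subgroups, the formula $\otimes_*(x,y)=n\esta(x)+m\esta(y)$ of Proposition \ref{P:tpO} transports verbatim. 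Here $\esta$ denotes the stabilization homomorphism, which is equally defined on $\pi_i(\SO(\ell))$ since the inclusions $\SO(\ell)\hookrightarrow\SO(\ell+1)$ are compatible with $\Or(\ell)\hookrightarrow\Or(\ell+1)$; no case distinction on $i$ relative to $m-1$ is needed because the statement is phrased for all $i\in\NN$ using $\esta$.

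The only mild subtlety — and the step I would be most careful about — is the bookkeeping for small values of $m$ or $n$, i.e.\ when $m=1$ or $n=1$, where $\SO(1)=\{1\}$ and the stabilization maps and the claimed formula degenerate trivially, and the edge case needs to be acknowledged so that the blanket ``$m,n$ odd'' hypothesis is genuinely covered. Beyond that, there is essentially no obstacle: the proposition is a transparent corollary of Proposition \ref{P:tpO} together with the group-level identifications $\PO(\text{odd})=\SO(\text{odd})$, and the proof is a short paragraph rather than a computation.
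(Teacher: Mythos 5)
Your proposal is correct and follows essentially the same route as the paper, which derives the proposition by noting that for $m,n$ odd the map $\otimes:\PO(m)\times\PO(n)\to\PO(mn)$ may be identified with $\otimes:\SO(m)\times\SO(n)\to\SO(mn)$ and then quoting Proposition~\ref{P:tpO}. You simply make explicit the group-level isomorphism $\SO(\ell)\iso\PO(\ell)$ in the odd case, the compatibility of the tensor product with the inclusion $\SO\hookrightarrow\Or$, and the degenerate edge cases --- all of which the paper leaves implicit --- so the two arguments are the same in substance.
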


\subsubsection{Case $m$ even and $n$ odd}
Let $m$ and $n$ be positive integers such that $m$ is even and $n$ is odd. The tensor product operation $\otimes:\Or(m)\times\SO(n) \rightarrow \Or(mn)$ sends the center of $\Or(m)\times\SO(n)$ to the center of $\Or(mn)$. As a consequence, the operation descends to the quotient
\begin{equation}\label{qOeo}
\begin{tikzcd}
\otimes:\PO(m)\times \SO(n) \arrow[r] & \PO(mn).
\end{tikzcd}
\end{equation} 

\begin{proposition}\label{P:tppiiOeo}
Let $i>1$. The homomorphism 
\begin{equation*}
\begin{tikzcd}
\otimes_{*}: \pi_{i}(\PO(m))\times\pi_{i}(\SO(n))  \arrow[r] &  \pi_{i}(\PO(mn))
\end{tikzcd}
\end{equation*} 
is given by $\otimes_{*}(x,y)=n\esta(x)+m\esta(y)$ for $x \in \pi_{i}(\PO(m))$ and $y\in \pi_{i}(\SO(n))$.
\end{proposition}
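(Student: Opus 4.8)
The plan is to reduce Proposition \ref{P:tppiiOeo} to the already-established unstable tensor formula on the full orthogonal groups, Proposition \ref{P:tpO}, by tracking what happens to homotopy groups under the vertical quotient maps in diagram \eqref{cd:Oeven}. The key observation is that for $i>1$ the canonical projections $\Or(m)\twoheadrightarrow\PO(m)$ and $\Or(mn)\twoheadrightarrow\PO(mn)$ induce isomorphisms on $\pi_i$: this is exactly the content recorded just after diagram \eqref{cd:Oeven}, since the kernels $\{\pm I\}$ are discrete so the long exact sequences of the covering fibrations $\{\pm I\}\to\Or\to\PO$ give $\pi_i(\Or(k))\iso\pi_i(\PO(k))$ for all $i\geq 2$ (and $\pi_i(\SO(k))\iso\pi_i(\Or(k))$ for $i\geq 1$ as well). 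So the left factor $\pi_i(\PO(m))$ is canonically $\pi_i(\Or(m))$, the target $\pi_i(\PO(mn))$ is canonically $\pi_i(\Or(mn))$, and $\pi_i(\SO(n))$ is canonically $\pi_i(\Or(n))$, in each case $i>1$.

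First I would set up the commutative square relating $\otimes:\Or(m)\times\SO(n)\to\Or(mn)$ to $\otimes:\PO(m)\times\SO(n)\to\PO(mn)$: the vertical maps are $(p_m\times\id)$ on the source and $p_{mn}$ on the target, where $p_k:\Or(k)\to\PO(k)$ is the quotient. Commutativity is precisely the assertion, already made in the paragraph introducing \eqref{qOeo}, that the tensor operation carries the center $\{\pm I_m\}\times\{1\}$ of $\Or(m)\times\SO(n)$ into the center $\{\pm I_{mn}\}$ of $\Or(mn)$, so it descends. Applying $\pi_i$ for $i>1$ and using that the vertical maps become isomorphisms gives a commutative square of abelian groups in which the bottom arrow $\otimes_*$ on the projective groups is identified with the top arrow $\otimes_*$ on $\Or(m)\times\Or(n)\to\Or(mn)$. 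Then Proposition \ref{P:tpO} says the top arrow is $(x,y)\mapsto n\,\esta(x)+m\,\esta(y)$, and transporting along the identifications (which are compatible with the stabilization maps $\esta$, since stabilization for $\SO$ and $\Or$ agree and the projections commute with inclusions) yields $\otimes_*(x,y)=n\,\esta(x)+m\,\esta(y)$ for $x\in\pi_i(\PO(m))$, $y\in\pi_i(\SO(n))$, as claimed.

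The one point requiring a little care — and what I expect to be the main (mild) obstacle — is checking that the stabilization homomorphism $\esta:\pi_i(\PO(m))\to\pi_i(\PO(mn))$ appearing in the statement really is the transport of $\esta:\pi_i(\Or(m))\to\pi_i(\Or(mn))$ under the vertical isomorphisms, i.e. that the relevant square involving the stabilization map $\s_1$ (extended to the projective quotients) commutes up to the identifications. This is because one must make sense of "$\esta(x)$" for $x\in\pi_i(\PO(m))$; the natural meaning is via the stabilization map $\PO(m)\to\PO(mn)$ induced by $\s_1:\Or(m)\to\Or(mn)$ on quotients, and one checks $p_{mn}\circ\s_1=\bar\s_1\circ p_m$ so that on $\pi_i$ ($i>1$) the diagram of isomorphisms commutes. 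Once this naturality is in place the argument is purely formal. Since the excerpt already flags that proofs in this subsection mirror those of the corresponding statements in \cite{TAA2021}, I would present the argument succinctly: state the square, invoke the $\pi_i$-isomorphism $\pi_i(\Or(k))\iso\pi_i(\PO(k))$ for $i\geq 2$, invoke Proposition \ref{P:tpO}, and conclude.
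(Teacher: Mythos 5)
Your proposal is correct and follows essentially the same route as the paper: set up the map of fibrations (equivalently, covering maps) from $\Or(m)\times\SO(n)\to\Or(mn)$ down to $\PO(m)\times\SO(n)\to\PO(mn)$, note that the quotient maps induce isomorphisms on $\pi_i$ for $i>1$, and transport the formula from Proposition \ref{P:tpO}. The only (cosmetic) difference is that the paper phrases the identification via the long exact sequence of a fibration with discrete fiber, while you phrase it as a covering; and the paper's citation of Proposition \ref{P:rtpO} in the last line is evidently a typo for Proposition \ref{P:tpO}, which is the one you correctly invoke.
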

\begin{proof}
There is a map of fibrations
\begin{equation}\label{cd:fOeo}
\begin{tikzcd}
Z(\Or(m))\times\{I_{n}\} \arrow[r,hookrightarrow] \arrow[d,"\otimes"] & \Or(m)\times \SO(n) \arrow[r,twoheadrightarrow] \arrow[d,"\otimes"] & \PO(m)\times\SO(n) \arrow[d,"\otimes"]\\
Z(\Or(mn)) \arrow[r,hookrightarrow] & \Or(mn) \arrow[r,twoheadrightarrow] & \PO(mn).
\end{tikzcd}
\end{equation}
From the homomorphism between the long exact sequences associated to the fibrations in diagram \eqref{cd:fOeo}, we obtain a commutative square
\begin{equation*}
\begin{tikzcd}
\pi_{i}(\Or(m))\times\pi_{i}(\SO(n)) \arrow[r,"\iso"] \arrow[d,"\otimes_{*}"] & \pi_{i}(\PO(m))\times\pi_{i}(\SO(n)) \arrow[d,"\otimes_{*}"]\\
\pi_{i}(\Or(mn)) \arrow[r,"\iso"] & \pi_{i}(\PO(mn)),
\end{tikzcd}
\end{equation*}
for $i>1$. From this diagram and Proposition \ref{P:tpO} we have that for all $i>1$, $\otimes_{*}(x,y)=n\esta(x)+m\esta(y)$ for $x \in \pi_{i}(\PO(m))$ and $y\in \pi_{i}(\SO(n))$.
\end{proof}

\begin{proposition}\label{P:tppi1Oeo}
Let $m$ be an even positive integer and $n$ an odd positive integer. The homomorphism induced on fundamental groups
\[
\otimes_{*}:\pi_{1}(\PO(m))\times\pi_{1}(\SO(n)) \longrightarrow \pi_{1}(\PO(mn))
\]
is described as follows.
\begin{enumerate}
\item If $m=2$ and $n=1$, then $\tensor_{*}(x,1)=x$ for $x\in \ZZ$.

\item If $m=2$ and $n\geq 3$, then $\tensor_{*}(x,y)=\pm x$ for $x\in \ZZ,\ y\in \ZZ/2$.

\item If $m\equiv0$ (mod $4$), then $\otimes_{*}(x,y)=x$ for $x\in \ZZ/2\oplus\ZZ/2$ and $y\in \ZZ/2$.

\item If $m>2$ and $m\equiv2$ (mod $4$), then $\otimes_{*}(x, y)=\pm x$ for $x \in \ZZ/4$ and $y\in \ZZ/2$.
\end{enumerate}
\end{proposition}
\begin{proof}
From the long exact sequence associated to diagram \eqref{cd:fOeo} there is a diagram of exact sequences
\begin{equation}\label{cd:meno}
\begin{tikzcd}
0 \arrow[d] & 0 \arrow[d] \\
\pi_{1}(\Or(m))\times \pi_{1}(\SO(n)) \arrow[d] \arrow[r,"\otimes_{*}"]  & \pi_{1}(\Or(mn)) \arrow[d] \\
\pi_{1}(\PO(m))\times \pi_{1}(\SO(n)) \arrow[d] \arrow[r,"\otimes_{*}"] & \pi_{1}(\PO(mn)) \arrow[d]\\
\pi_{0}Z(\Or(m)) \arrow[d] \arrow[r,equal] & \pi_{0}Z(\Or(mn)) \arrow[d]\\
\pi_{0}(\Or(m)) \arrow[d] \arrow[r,"\otimes_{*}"] & \pi_{0}(\Or(mn)) \arrow[d]\\
\pi_{0}(\PO(m)) \arrow[r,"\otimes_{*}"] & \pi_{0}(\PO(mn)).
\end{tikzcd}
\end{equation}
Since $Z(\Or(m))$ is contained in the connected component of the identity of $\Or(m)$, then the homomorphism $\pi_{0}Z(\Or(m)) \to \pi_{0}(\Or(m))$ is trivial. Hence we have a diagram of short exact sequences
\begin{equation}\label{cd:meno2}
\begin{tikzcd}[column sep=large]
\pi_{1}(\Or(m))\times \pi_{1}(\SO(n)) \arrow[r,hookrightarrow] \arrow[d,"\otimes_{*}"] & \pi_{1}(\PO(m))\times \pi_{1}(\SO(n)) \arrow[r,twoheadrightarrow] \arrow[d,"\otimes_{*}"] & \pi_{0}Z(\Or(m)) \arrow[d,equal]\\
\pi_{1}(\Or(mn)) \arrow[r,hookrightarrow] & \pi_{1}(\PO(mn)) \arrow[r,twoheadrightarrow] & \pi_{0}Z(\Or(mn)).
\end{tikzcd}
\end{equation}

We consider the following cases.

\noindent \textbf{CASE 1.} Let $m=2$.

\textbf{Subcase i.} Let $n=1$. Observe that, under these assumptions, the tensor product map $\tensor: \Or(2)\times \SO(1) \to \Or(2)$ is given by $\tensor(A,1)=A$ for all $A\in\Or(2)$. It follows that the left vertical map in diagram \eqref{cd:meno2} is the projection onto the first factor. Hence \eqref{cd:meno2} can be written as
\begin{equation}\label{cd:m2n1}
\begin{tikzcd}[column sep=large]
0 \arrow[r] & \ZZ\times 1 \arrow[r] \arrow[d,"\proj_{1}"] & \ZZ\times 1 \arrow[r] \arrow[d,"\otimes_{*}"] & \ZZ/2 \arrow[r] \arrow[equal,d] & 0\\
0 \arrow[r] & \ZZ \arrow[r] & \ZZ \arrow[r] & \ZZ/2 \arrow[r] & 0.
\end{tikzcd}
\end{equation}
Moreover, the top and bottom rows are necessarily isomorphic to the non-split short exact sequence $0\to \ZZ \xrightarrow{\times 2} \ZZ \to \ZZ/2 \to 0$. Therefore the induced homomorphism on fundamental groups, $\tensor_{*}: \pi_{1}(\PO(2))\times \pi_{1}(\SO(1)) \to \pi_{1}(\PO(2))$ is given by $\tensor_{*}(x,1)=x$ for all $x\in \ZZ$.

\textbf{Subcase ii.} Let $n\geq 3$. By Proposition \ref{P:tpO}, the left vertical map in diagram \eqref{cd:meno2} is given by $\tensor_{*}(x,y)=n\esta(x)+2\esta(y)$ where $\esta:\pi_{1}(\Or(2)) \to \pi_{1}(\Or(2n))$ is surjective by the connectivity of the componentwise stabilization maps (Notation \ref{Notationesta}). Since $\pi_{1}(\Or(2))\cong \ZZ$ and $\pi_{1}(\Or(2n))\cong \ZZ/2$ for $n\ge 3$, we have $2\,\esta(y)=0$ and $n\,\esta(x)=\esta(x)$ in $\ZZ/2$. It follows that $\tensor_{*}(x,y)=\esta(x)=q(\proj_{1}(x,y))$ where $q:\ZZ\to \ZZ/2$ is the canonical projection. 

Let $n=2k+1$ for $k\geq 1$. Then $Z(\Spin(2n))=Z(\Spin(4k+2))\iso \ZZ/4$. Thus, \eqref{cd:meno2} takes the form
\begin{equation}\label{cd:m2n3}
\begin{tikzcd}[column sep=large]
0 \arrow[r] & \ZZ\times \ZZ/2\arrow[r,"i"] \arrow[d,"q\,\circ\, \proj_{1}"] & \ZZ\times  \ZZ/2 \arrow[r,"p"] \arrow[d,"\otimes_{*}"] & \ZZ/2 \arrow[r] \arrow[equal,d] & 0\\
0 \arrow[r] & \ZZ/2 \arrow[r,"i'"] & \ZZ/4 \arrow[r,"p'"] & \ZZ/2 \arrow[r] & 0.
\end{tikzcd}
\end{equation}

Out of the $4$ homomorphisms in $\Hom(\ZZ\times \ZZ/2,\ZZ/2)\iso \ZZ/2\times \ZZ/2$, exactly $3$ are surjective. In our situation, injectivity of $i$ and exactness at $\ZZ\times \ZZ/2$ of the top row forces the surjection $p$ to factor through the projection to the first coordinate, namely $p(x,y)=q(\proj_{1}(x,y))$, and hence the injection $i$ is necessarily $i(x,y)=(2x,y)$ for all $(x,y) \in \ZZ\times\ZZ/2$. Furthermore, we notice that the bottom row is the unique non-split short exact sequence given by $i'(u)= 2u$ and $p'(v)=v$ for all $u\in \ZZ/2$ and $v\in \ZZ/4$.

Using the identifications above, and a careful analysis of the commutativity constraints in \eqref{cd:m2n3}, one finds that the induced tensor product homomorphism $\tensor_{*}: \pi_{1}(\PO(2))\times \pi_{1}(\SO(n)) \longrightarrow \pi_{1}(\PO(2n))$ 
is such that $\tensor_{*}(1,0)=r$ with $r\in\{1,3\}$ and $\tensor_{*}(0,1)=0$. Therefore, $\tensor_{*}(x,y) = rx = \pm x$ for all $(x,y)\in \ZZ\times \ZZ/2$.

\noindent \textbf{CASE 2.} Let $m>2$.
By Corollary \ref{C:tpO}, and the parities of $m$ and $n$, the homomorphism $\tensor_{*}:\pi_{1}(\Or(m))\times \pi_{1}(\SO(n)) \to \pi_{1}(\Or(mn))$ is the projection onto the first coordinate. Diagram \eqref{cd:meno2} becomes 
\begin{equation}\label{cd:tppi1}
\begin{tikzcd}[column sep=large]
0 \arrow[r] & \ZZ/2\times \ZZ/2 \arrow[r] \arrow[d,"\proj_{1}"] & Z(\Spin(m))\times \ZZ/2 \arrow[r] \arrow[d,"\otimes_{*}"] & \ZZ/2 \arrow[r] \arrow[equal,d] & 0\\
0 \arrow[r] & \ZZ/2 \arrow[r] & Z(\Spin(mn)) \arrow[r] & \ZZ/2 \arrow[r] & 0,
\end{tikzcd}
\end{equation}

\textbf{Subcase i.} Let $m=4k$ with $k\geq 1$, then diagram \eqref{cd:tppi1} takes the form
\begin{equation}\label{cd:tppi14k}
\begin{tikzcd}[column sep=large]
0 \arrow[r] & \ZZ/2\times \ZZ/2 \arrow[r,"i"] \arrow[d,"\proj_{1}"] & (\ZZ/2\oplus \ZZ/2)\times \ZZ/2 \arrow[r,"p"] \arrow[d,"\otimes_{*}"] \arrow[r,dotted,"s",bend left=35,leftarrow] & \ZZ/2 \arrow[r] \arrow[equal,d] & 0\\
0 \arrow[r] & \ZZ/2 \arrow[r,"i'"] & \ZZ/2\oplus \ZZ/2 \arrow[r] & \ZZ/2 \arrow[r] & 0.
\end{tikzcd}
\end{equation}

Let $s: \ZZ/2 \rightarrow (\ZZ/2\oplus \ZZ/2)\times \ZZ/2$ be a section of $p$ such that $s(1)=(0\oplus1,0)$. The composite $\tensor_{*}\circ s$, and $s$ make the short exact sequences in diagram \eqref{cd:tppi14k} split compatibly. Thus
\begin{equation*}
\begin{tikzcd}[column sep=large]
(\ZZ/2\oplus \ZZ/2)\times \ZZ/2 \arrow[r,"\iso"] \arrow[d,"\otimes_{*}"] & \im i\oplus \im s \arrow[d,"\proj_{1}\;\oplus\;\id"]\\
\ZZ/2\oplus \ZZ/2 \arrow[r,"\iso"] & \im i'\oplus \im(\tensor_{*}\circ s).
\end{tikzcd}
\end{equation*}

Hence $\tensor_{*}(\alpha\oplus\beta,y)=\alpha\oplus\beta$ for all $\alpha, \beta, y \in \ZZ/2$.

\textbf{Subcase ii.} 
Let $m=4k+2$ with $k\geq 1$, then diagram \eqref{cd:tppi1} takes the form
\begin{equation}\label{cd:tppi14k2}
\begin{tikzcd}
0 \arrow[r] & \ZZ/2\times \ZZ/2 \arrow[r,"i"] \arrow[d,"\proj_{1}"] & \ZZ/4 \times \ZZ/2 \arrow[r,"p"] \arrow[d,"\tensor_{*}"] & \ZZ/2 \arrow[r] \arrow[equal,d] & 0\\
0 \arrow[r] & \ZZ/2 \arrow[r,"i'"] & \ZZ/4 \arrow[r,"p'"] & \ZZ/2 \arrow[r] & 0,
\end{tikzcd}
\end{equation}

Using an analogous argument to the one in Case~1, Subcase~ii, we can show that $\tensor_{*}(x,y)= \pm x$ for all $(x,y) \in \ZZ/4\times \ZZ/2$.
\end{proof}

\begin{proposition}\label{P:tppi0Oeo}
Let $n$ be a positive odd integer, then the homomorphism  
\begin{equation*}
\begin{tikzcd}
\otimes_{*}: \pi_{0}(\PO(m))\times\pi_{0}(\SO(n)) \arrow[r] & \pi_{0}(\PO(mn))
\end{tikzcd}
\end{equation*}
is a bijection.
\end{proposition}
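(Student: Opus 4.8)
The plan is to analyze the map on $\pi_0$ induced by the tensor product $\otimes:\PO(m)\times\SO(n)\to\PO(mn)$ directly, using the bottom portion of diagram \eqref{cd:meno}. Since $n$ is odd, $\SO(n)$ is connected, so $\pi_0(\SO(n))=\{*\}$ and the source is just $\pi_0(\PO(m))$. From Table \ref{tab:connectcomp}, since $m$ is even, $\pi_0(\PO(m))\iso\ZZ/2$ via the determinant, and likewise $\pi_0(\PO(mn))\iso\ZZ/2$ since $mn$ is even; the bottom row of \eqref{cd:meno} compares these. So what must be shown is that $\otimes_*:\ZZ/2\to\ZZ/2$ on $\pi_0$ is nontrivial, i.e.\ that the tensor product of an orientation-reversing orthogonal matrix in $\Or(m)$ with $I_n$ lies in the nonidentity component of $\PO(mn)$, equivalently that $I_m\tensor A$ has determinant $-1$ in $\Or(mn)$ when $A\in\Or(m)\setminus\SO(m)$.

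First I would recall the standard determinant identity for Kronecker products: for $A\in\fM{m}$ and $B\in\fM{n}$ one has $\det(A\tensor B)=\det(A)^n\det(B)^m$. Applying this with $B=I_n$ gives $\det(A\tensor I_n)=\det(A)^n$. Since $n$ is odd and $\det(A)=-1$, we get $\det(A\tensor I_n)=-1$, so $A\tensor I_n$ represents the nonidentity element of $\pi_0(\Or(mn))=\ZZ/2$. Passing to $\PO(mn)$, which has the same $\pi_0$ because the center $Z(\Or(mn))=\{\pm I_{mn}\}$ lies in the identity component (as $mn$ is even, $\det(-I_{mn})=1$), the class of $A\tensor I_n$ is still nonidentity in $\pi_0(\PO(mn))$. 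Hence $\otimes_*$ on $\pi_0$ sends the generator of $\pi_0(\PO(m))\iso\ZZ/2$ to the generator of $\pi_0(\PO(mn))\iso\ZZ/2$, so it is an isomorphism of sets, i.e.\ a bijection.

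The cleanest way to present this is probably to extract the relevant commutative square from \eqref{cd:meno}, namely
\begin{equation*}
\begin{tikzcd}
\pi_{0}(\Or(m))\times\pi_{0}(\SO(n)) \arrow[r,"\otimes_{*}"] \arrow[d,twoheadrightarrow] & \pi_{0}(\Or(mn)) \arrow[d,twoheadrightarrow]\\
\pi_{0}(\PO(m))\times\pi_{0}(\SO(n)) \arrow[r,"\otimes_{*}"] & \pi_{0}(\PO(mn)),
\end{tikzcd}
\end{equation*}
observe that the vertical maps are bijections (again because the relevant centers sit inside the identity components, by the parity of $m$ and $mn$), and that $\pi_0(\SO(n))$ is trivial, so it suffices to check the top map $\pi_0(\Or(m))\to\pi_0(\Or(mn))$ induced by $A\mapsto A\tensor I_n$ is a bijection. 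This follows from $\det(A\tensor I_n)=\det(A)^n$ and $n$ odd, which shows that the composite $\pi_0(\Or(m))\xrightarrow{\otimes_*}\pi_0(\Or(mn))\xrightarrow{\det}\ZZ/2$ equals the determinant isomorphism $\pi_0(\Or(m))\xrightarrow{\det}\ZZ/2$, forcing $\otimes_*$ to be a bijection.

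There is no serious obstacle here; the only point requiring a moment's care is the bookkeeping that $\pi_0(\PO(k))\iso\pi_0(\Or(k))$ for $k$ even, which is exactly the content of diagram \eqref{cd:Oeven} and Table \ref{tab:connectcomp} (the center $\{\pm I_k\}$ maps trivially to $\pi_0(\Or(k))$ since $-I_k$ has determinant $+1$). Once that identification is in hand, the whole statement reduces to the one-line Kronecker determinant computation $\det(A\tensor I_n)=\det(A)^n=(-1)^n=-1$.
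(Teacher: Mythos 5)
The paper states this proposition without proof, treating it as routine, so there is no in-text argument to compare against. Your argument is correct and is the natural one, consistent with the paper's surrounding setup: since $n$ is odd, $\SO(n)$ is connected and $\pi_{0}(\SO(n))$ is trivial; and for $k$ even the center $\{\pm I_{k}\}$ lies in $\SO(k)$, so the quotient $\Or(k)\twoheadrightarrow\PO(k)$ induces a bijection $\pi_{0}(\Or(k))\iso\pi_{0}(\PO(k))\iso\ZZ/2$ (this is the content of diagram \eqref{cd:Oeven} and Table \ref{tab:connectcomp}). The claim then reduces to showing that $A\mapsto A\tensor I_{n}$ carries the nonidentity component of $\Or(m)$ to the nonidentity component of $\Or(mn)$, which follows from the Kronecker determinant identity $\det(A\tensor I_{n})=\det(A)^{n}$ and the oddness of $n$. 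One small typo to fix: in the sentence setting up the goal you write that ``$I_{m}\tensor A$ has determinant $-1$,'' where you mean $A\tensor I_{n}$ for $A\in\Or(m)\setminus\SO(m)$; the subsequent computation uses the correct expression.
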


\section{Proof of Theorem \ref{mainO}} \label{sec:mainO}

Let $m$ and $n$ be positive integers. By applying the classifying-space functor to the homomorphism \eqref{qO} we obtain a map 
\begin{equation*}
\begin{tikzcd}
f_{\tensor}: \B\PO(m)\times\B\PO(n)  \arrow[r] & \B\PO(mn).
\end{tikzcd}
\end{equation*}

\begin{proposition}\label{P:TtildaOeo}
Let $m$ and $n$ be relatively prime positive integers such that $m$ is even, and $n$ is odd. There exists a homomorphism $\widetilde{\Tr}: \PO(m)\times \SO(n) \rightarrow \SO(N)$, for some positive integer $N$, such that the homomorphisms induced on homotopy groups 
\begin{equation*}
\begin{tikzcd}
\widetilde{\Tr}_{i}:\pi_{i}(\PO(m))\times \pi_{i}(\SO(n))  \arrow[r] &  \pi_{i}(\SO(N))
\end{tikzcd}
\end{equation*}
are given by the following expressions. 

\begin{enumerate}
\item If $i=1$, then there exist $a, z, z' \in \ZZ/2$ such that
\begin{equation*}
\begin{cases}
\widetilde{\Tr}_{1}(x,y)=ax+y & \text{if $m=2$},\\
\widetilde{\Tr}_{1}(\alpha\oplus\beta,y)=z\beta+y & \text{if $m\equiv0$ (mod $4$)},\\
\widetilde{\Tr}_{1}(\delta,y)=z'\delta+y & \text{if $m>2$ and $m\equiv2$ (mod $4$)},
\end{cases}
\end{equation*}
for all $x\in \ZZ$, $y, \alpha, \beta \in \ZZ/2$ and $\delta \in \ZZ/4$.

\item Let $d$ denote $\min\{m,n\}$. If $1<i<d-1$,
\[
\widetilde{\Tr}_{i}(x,y)=2umx+vy \quad\text{where}\quad x \in \pi_{i}(\PO(m)), \quad y \in \pi_{i}(\SO(n)),
\]
and $u, v$ are some positive integers, independent of $i$, for which $\bigl|vn-2um^{2}\bigr|=1$.
\end{enumerate}
\end{proposition}
\begin{proof}
We establish the result for $m<n$; the case of $n<m$ follows similarly. Since $m$ and $n$ are relatively prime, there exist positive integers $u$ and $v$ such that $vn-2um^{2}=\pm1$. Let $N$ denote $um^{2}+vn$, and let $\Tr$ denote the composite
\begin{equation*}
\begin{tikzcd}[column sep=large]
\Or(m)\times \SO(n) \arrow[d,"(\tensor^{2}\text{,}\id)"] \\
\SO(m^{2})\times \SO(n) \arrow[d,"(\oplus^{u}\text{,}\oplus^{v})"] \\
\SO(um^{2})\times \SO(vn) \arrow[d,"\oplus"] \\
\SO(N).
\end{tikzcd}
\end{equation*}

Note that the elements $\bigl(\pm I_{m}, I_{n}\bigr)$ are sent to $\bigl(I_{m^{2}},I_{n^{2}}\bigr)$ by $(\tensor^{2},\id)$, hence to the identity by the composite $\Tr$ defined above. Hence $\Tr$ factors through $\PO(m)\times \SO(n)$
\begin{equation*}
\begin{tikzcd}[column sep=huge]
\Or(m)\times \SO(n) \arrow[rd,bend left=20,"\Tr"] \arrow[d,twoheadrightarrow]  &  \\
 \PO(m)\times\SO(n) & \SO(N). \arrow[l,leftarrow,"\widetilde{\Tr}"]
\end{tikzcd}
\end{equation*}

From Corollaries \ref{C:dsO}, \ref{C:rdsO} and \ref{C:rtpO} we have that $\Tr_{i}(x,y)=2umx+vy$ for $i<m-1$. Now, for all $i>1$, the map of fibrations
\begin{equation*}
\begin{tikzcd}
Z(\Or(m))\times\{I_{n}\} \arrow[r,hookrightarrow] \arrow[d] & \Or(m)\times \SO(n) \arrow[r,twoheadrightarrow] \arrow[d,"\Tr"] & \PO(m)\times\SO(n) \arrow[d,"\widetilde{\Tr}"]\\
\{I_{N}\} \arrow[r] & \SO(N) \arrow[r,equal] & \SO(N)
\end{tikzcd}
\end{equation*}
induces a commutative diagram
\begin{equation*}
\begin{tikzcd}
\pi_{i}(\Or(m))\times\pi_{i}(\SO(n)) \arrow[r,"\iso"] \arrow[d,"\Tr_{i}"] & \pi_{i}(\PO(m))\times\pi_{i}(\SO(n)) \arrow[d,"\widetilde{\Tr}_{i}"]\\
\pi_{i}(\SO(N)) \arrow[r,equal] & \pi_{i}(\SO(N)).
\end{tikzcd}
\end{equation*}
This implies that $\widetilde{\Tr}_{i}=\Tr_{i}$ for all $i>1$. In particular, when $1<i<m-1$ we have $\widetilde{\Tr}_{i}(x,y)=\Tr_{i}(x,y)=2umx+vy$ for all $(x,y) \in \pi_{i}(\PO(m))\times\pi_{i}(\SO(n))$.

For $i=1$, the map of fibrations induces the commutative diagram below
\begin{equation}\label{dig:T1OmSOn}
\begin{tikzcd}
\pi_{1}(\Or(m))\times\pi_{1}(\SO(n)) \arrow[r,hookrightarrow] \arrow[d,"\Tr_{1}"] & \pi_{1}(\PO(m))\times\pi_{1}(\SO(n)) \arrow[d,"\widetilde{\Tr}_{1}"]\\
\pi_{1}(\SO(N)) \arrow[r,equal] & \pi_{1}(\SO(N)).
\end{tikzcd}
\end{equation}

\noindent \textbf{CASE 1:} Let $m=2$. Diagram \eqref{dig:T1OmSOn} takes the form
\begin{equation}\label{cd:TTtildem2}
\begin{tikzcd}
\ZZ\times \ZZ/2 \arrow[r,hookrightarrow] & \ZZ\times \ZZ/2 \arrow[d,"\widetilde{\Tr}_{1}"]\\
 & \ZZ/2. \arrow[ul,leftarrow,"\Tr_{1}"]
\end{tikzcd}
\end{equation}
The equality $vn-2um^{2}=\pm1$ implies that $v$ is odd. Hence, $\Tr_{1}(x,y)=2umx+vy=y$, so $\Tr_{1}$ is the projection onto the second coordinate. Moreover, the homomorphism $\pi_{1}(\Or(2)) \to \pi_{1}(\PO(2))$ is multiplication by $\pm 2$. Thus the horizontal homomorphism in diagram \eqref{cd:TTtildem2} is given by $(w,y)\mapsto (\pm2w,y)$ for all $w\in\ZZ$ and $y\in \ZZ/2$. 

The homomorphism $\widetilde{\Tr}_{1}: \ZZ\times\ZZ/2 \to \ZZ/2$ has the form $\widetilde{\Tr}_{1}(x,y)=ax+by$ for all $x\in \ZZ$ and $y\in \ZZ/2$, and for some $a, b \in \ZZ/2$. 

Therefore, $\widetilde{\Tr}_{1}(\pm2w,y)=a(\pm2w)+by=by$ for all $w\in \ZZ$ and $y\in \ZZ/2$. On the other hand, by commutativity of diagram \eqref{cd:TTtildem2}, $\widetilde{\Tr}_{1}(\pm2w,y)=\Tr_{1}(w,y)=y$. Hence $b=1$. Consequently, $\widetilde{\Tr}_{1}(x,y)=ax+y$ for some $a \in \ZZ/2$.


\noindent \textbf{CASE 2:} Let $m>2$.  Diagram \eqref{dig:T1OmSOn} takes the form
\begin{equation}\label{cd:TTtilde}
\begin{tikzcd}
\ZZ/2\times \ZZ/2 \arrow[r,hookrightarrow] & Z(\Spin(m))\times \ZZ/2 \arrow[d,"\widetilde{\Tr}_{1}"]\\
 & \ZZ/2, \arrow[ul,leftarrow,"\Tr_{1}"]
\end{tikzcd}
\end{equation}
where $\Tr_{1}$ is also the projection onto the second coordinate. 

\textbf{Subcase i:} Let $m=4k$ for $k\geq 1$. Diagram \eqref{cd:TTtilde} becomes
\begin{equation*}
\begin{tikzcd}
\ZZ/2\times \ZZ/2 \arrow[r,hookrightarrow] & (\ZZ/2\oplus \ZZ/2)\times \ZZ/2 \arrow[d,"\widetilde{\Tr}_{1}"]\\
 & \ZZ/2 \arrow[ul,leftarrow,"\Tr_{1}"]
\end{tikzcd}
\end{equation*}
where the horizontal homomorphism is the inclusion $(\alpha,\beta)\mapsto (\alpha\oplus0,\beta)$. Thus $\widetilde{\Tr}_{1}(1\oplus0,0)=0$ and $\widetilde{\Tr}_{1}(\mathbf{0},1)=1$. Let $z$ denote $\widetilde{\Tr}_{1}(0\oplus1,0)$. Hence $\widetilde{\Tr}_{1}(\alpha\oplus\beta,y)=z\beta+y$.

\textbf{Subcase ii:} Let $m=4k+2$ for $k\geq 1$. Diagram \eqref{cd:TTtilde} becomes
\begin{equation*}
\begin{tikzcd}
\ZZ/2\times \ZZ/2 \arrow[r,hookrightarrow] & \ZZ/4\times \ZZ/2 \arrow[d,"\widetilde{\Tr}_{1}"]\\
 & \ZZ/2 \arrow[ul,leftarrow,"\Tr_{1}"]
\end{tikzcd}
\end{equation*}
where the horizontal homomorphism is the inclusion $(\alpha,\beta)\mapsto (\alpha,\beta)$. Thus $\widetilde{\Tr}_{1}(0,1)=1$. Let $z'$ denote $\widetilde{\Tr}_{1}(1,0)$. Hence $\widetilde{\Tr}_{1}(\delta,y)=z'\delta+y$.
%
\end{proof}

\subsection{A \texorpdfstring{$d$}{d}-connected map where \texorpdfstring{$d$}{d} is the minimum of two positive integers of opposite parity}

Let $J$ denote the map
\begin{equation*}
\begin{tikzcd}[row sep=tiny]
J:\B\PO(m)\times\B\PO(n) \arrow[r,mapsto] & \B\PO(mn)\times\B\SO(N)\\
(x,y) \arrow[r,mapsto] & \Bigl(f_{\tensor}(x,y)\text{,}\B\widetilde{\Tr}(x,y)\Bigr).
\end{tikzcd}
\end{equation*}

Let $J_{i}$ denote the homomorphism induced on homotopy groups by $J$.
\begin{equation}\label{Ji}
J_{i}: \pi_{i}(\B\PO(m))\times \pi_{i}(\B\PO(n)) \rightarrow \pi_{i}(\B\PO(mn))\times\pi_{i}(\B\SO(N))
\end{equation}

\begin{proposition}\label{P:Jeoim}
Let $m$ and $n$ be relatively prime positive integers such that $m$ is even, and $n$ is odd. Let $d$ denote $\min\{m,n\}$. The homomorphism $J_{i}$ is an isomorphism for all $i<d$.
\end{proposition}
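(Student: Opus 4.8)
The plan is to compute $J_i$ directly, using the natural isomorphisms $\pi_i(\B G)\cong\pi_{i-1}(G)$ to identify $J_i$ with the homomorphism
\[
\pi_{i-1}(\PO(m))\times\pi_{i-1}(\SO(n))\xrightarrow{\ (\tensor_{*},\,\widetilde{\Tr}_{i-1})\ }\pi_{i-1}(\PO(mn))\times\pi_{i-1}(\SO(N))
\]
(here $\PO(n)=\SO(n)$ because $n$ is odd). Since $J_i$ is a group homomorphism, it suffices to prove this map is bijective whenever $i<d$. Put $j=i-1$, so that $i<d$ becomes $j<d-1$, whence also $j<m-1$ and $j<n-1$; consequently every stabilization map occurring in the definitions of $\tensor$ and of the composite defining $\widetilde{\Tr}$ is an isomorphism in degree $j$, so all four homotopy groups above are canonically identified with their stable values, and the formulas of Propositions~\ref{P:tppiiOeo}, \ref{P:tppi1Oeo}, \ref{P:tppi0Oeo} and~\ref{P:TtildaOeo} apply. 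I would then split into three ranges of $j$.

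For $j=0$ the groups $\pi_0(\SO(n))$ and $\pi_0(\SO(N))$ vanish, so $J_1$ is just $\tensor_{*}\colon\pi_0(\PO(m))\to\pi_0(\PO(mn))$, which is bijective by Proposition~\ref{P:tppi0Oeo}. For $2\le j<d-1$, Table~\ref{tab:lowhg} together with diagram~\eqref{cd:Oeven} (valid since $m$ is even) identifies all four groups with the common stable group $A\coloneqq\pi_j(\SO)\in\{0,\ZZ,\ZZ/2\}$, and by Propositions~\ref{P:tppiiOeo} and~\ref{P:TtildaOeo}(1) the map $J_{j+1}$ becomes, under these identifications, the endomorphism of $A\oplus A$ given by the integer matrix $\begin{pmatrix}n&m\\2um&v\end{pmatrix}$. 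Its determinant is $nv-2um^{2}=\pm1$ by Proposition~\ref{P:TtildaOeo}, so the matrix lies in $\mathrm{GL}_2(\ZZ)$ and hence acts invertibly on $A\oplus A$ for any abelian group $A$; thus $J_{j+1}$ is an isomorphism.

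The remaining range $j=1$ occurs only when $d\ge3$, hence $m\ge4$ and $n\ge3$, and it is the delicate case, since the groups are finite and the relevant homomorphisms are only block-triangular rather than given by a clean matrix. First one checks that $N=um^{2}+vn$ is odd: the identity $vn-2um^{2}=\pm1$ forces $v$ odd, while $um^{2}$ is even, so $\pi_1(\SO(N))\cong\ZZ/2$. If $m\equiv0\pmod4$, then $mn\equiv0\pmod4$, and Proposition~\ref{P:tppi1Oeo}(1) with Proposition~\ref{P:TtildaOeo}(2) give $J_2\colon(\alpha\oplus\beta,\gamma)\mapsto(\alpha\oplus\beta,\,z\beta+\gamma)$ on $(\ZZ/2\oplus\ZZ/2)\times\ZZ/2$, whose associated matrix over $\ZZ/2$ is unipotent and therefore invertible no matter the value of $z$. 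If $m\equiv2\pmod4$, then $mn\equiv2\pmod4$, and $J_2\colon(\delta,\gamma)\mapsto(n\delta,\,z'\delta+\gamma)$ on $\ZZ/4\times\ZZ/2$; since $\gcd(n,4)=1$ the map $\delta\mapsto n\delta$ is an automorphism of $\ZZ/4$, so $J_2$ has trivial kernel and is hence bijective.

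The main obstacle is precisely this last step: one must pin down the parity of $N$ and the exact shape of the induced maps on fundamental groups, and verify that the undetermined constants $z,z'$ from Proposition~\ref{P:TtildaOeo}(2) are irrelevant to bijectivity. After that the conclusion is immediate, and the edge cases $m=2$ or $n=1$ (where $d\le2$, so only the computation for $j=0$ is relevant) require nothing further.
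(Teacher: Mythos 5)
Your proof is correct and takes essentially the same approach as the paper's: degree‑shift, reduce to the matrix $\begin{pmatrix} n & m \\ 2um & v\end{pmatrix}$ of determinant $\pm1$ in the stable range $j\geq 2$, handle $j=0$ via Proposition~\ref{P:tppi0Oeo}, and treat $j=1$ by cases on $m\pmod 4$. One small improvement worth noting: in the $m\equiv 2\pmod 4$ case the paper asserts $n\alpha=\alpha$ in $\ZZ/4$ ``since $vn-2um^2=\pm 1$,'' which in fact only yields that $n$ is odd (not $n\equiv 1\pmod 4$); your observation that $\gcd(n,4)=1$ already makes $\delta\mapsto n\delta$ an automorphism of $\ZZ/4$ closes that gap cleanly without the unjustified simplification.
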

\begin{proof}
We establish the result for $m<n$; the case of $n<m$ follows similarly.

Let $i<m$. Given that the homotopy groups of the spaces involved are zero in degrees $i\equiv 3, 5, 6, 7$ (mod 8), it suffices to prove that $J_{i}$ is an isomorphism for $i=1,2$, and for $i\equiv 0,1,2,4$ (mod 8) with $i>2$.

\noindent \textbf{CASE 1:} Let $i=1$. By Proposition \ref{P:tppi0Oeo} the homomorphism $J_{1}: \ZZ/2 \rightarrow \ZZ/2$ is the identity.

\noindent \textbf{CASE 2:} Let $i=2$. The homomorphism \eqref{Ji} takes the form
\begin{equation*}
\begin{tikzcd}
J_{2}: Z(\Spin(m)) \times \ZZ/2 \arrow[r] & Z(\Spin(mn)) \times \ZZ/2.
\end{tikzcd}
\end{equation*}

\textbf{Subcase i:} Let $m=4k$ for some $k\in \ZZ$. From Propositions \ref{P:tppi1Oeo} and \ref{P:TtildaOeo}, $J_{2}$ is given by
\begin{equation*}
\begin{tikzcd}[row sep=tiny]
J_{2}: (\ZZ/2\oplus \ZZ/2)\times \ZZ/2 \arrow[r] & (\ZZ/2\oplus \ZZ/2) \times \ZZ/2\\
(\alpha\oplus\beta,y) \arrow[r,mapsto] & \Bigl(\alpha\oplus\beta,z\beta+y\Bigr),
\end{tikzcd}
\end{equation*}
i.e. $J_{2}$ is represented by the invertible matrix $\displaystyle{\begin{pmatrix}
1 & 0 & 0\\
0 & 1 & 0 \\
0 & z & 1
\end{pmatrix}}$ for some $z\in \ZZ/2$. Hence $J_{2}$ is an isomorphism.

\textbf{Subcase ii:} Let $m=4k+2$ for some $k\in \ZZ$. From Proposition  \ref{P:tppi1Oeo}, $f_{\tensor}(x,y)=\pm x$.
Thus, by Proposition \ref{P:TtildaOeo}, $J_{2}$ is given by
\begin{equation*}
\begin{tikzcd}[row sep=tiny]
J_{2}: \ZZ/4\times \ZZ/2 \arrow[r] & \ZZ/4 \times \ZZ/2\\
(x,y) \arrow[r,mapsto] & \Bigl(\pm x,z' x+y\Bigr),
\end{tikzcd}
\end{equation*}
i.e. $J_{2}$ is represented by the invertible matrix $\displaystyle{\begin{pmatrix}
\pm 1 & 0 \\
z' & 1
\end{pmatrix}}$ for some $z'\in \ZZ/2$. Hence $J_{2}$ is an isomorphism.

\noindent \textbf{CASE 3:} Let $2<i<m$. The homomorphism \eqref{Ji} takes the form $J_{i}: \ZZ\times\ZZ \rightarrow \ZZ\times\ZZ$ if $i\equiv0,4$ (mod 8), and $J_{i}: \ZZ/2\times \ZZ/2 \rightarrow \ZZ/2\times \ZZ/2$ if $i\equiv1,2$ (mod 8). Note that both homomorphisms are represented by the matrix $\displaystyle{\begin{pmatrix}
n & m\\
2um & v
\end{pmatrix}}$, whose determinant equals $vn-2um^{2}$. Thus, $J_{i}$ is an isomorphism because, by Proposition \ref{P:TtildaOeo}, $vn-2um^{2}=\pm 1$.
\end{proof}

\begin{equation}\label{cd:stabTevenodd}
\begin{tikzcd}[row sep=large,column sep=tiny]
&\pi_{m-1}(\Or(m))\times\pi_{m-1}(\SO(n))  \arrow[dr,rightarrow,"\proj_{2}"] & \\
\pi_{m-1}(\Or(m)) \arrow[ru,leftarrow,"\proj_{1}"] \arrow[d,twoheadrightarrow,"\esta"] & &\pi_{m-1}(\SO(n)) \arrow[dd,rightarrow,equal]\\
\pi_{m-1}(\Or(m^{2})) \arrow[d,rightarrow,"\times (2m)"] & &  \\
\pi_{m-1}(\Or(m^{2})) \arrow[d,rightarrow,"\esta"] & & \pi_{m-1}(\SO(n)) \arrow[d,rightarrow,"\esta"] \\
\pi_{m-1}(\Or(um^{2})) \arrow[u,leftarrow,"\iso"] & &  \pi_{m-1}(\SO(vn)) \arrow[dl,rightarrow,"\times v"] \arrow[u,leftarrow,"\iso"] \\
& \pi_{m-1}(\Or(um^{2}))\times\pi_{m-1}(\SO(vn)) \arrow[d,rightarrow,"\esta\times \esta"] \arrow[ul,leftarrow,"\times u"] & \\
&\pi_{m-1}(\SO(N))\times\pi_{m-1}(\SO(N)) \arrow[d,rightarrow,"+"] \arrow[u,leftarrow,"\iso"]& \\
&\pi_{m-1}(\SO(N))& 
\end{tikzcd}
\end{equation}

\begin{proposition}\label{P:Jeom}
Let $m$ and $n$ be relatively prime positive integers such that $m$ is even, and $n$ is odd. Let $d$ denote $\min\{m,n\}$. The induced homomorphism
\begin{equation}\label{Jepi}
\begin{tikzcd}
J_{d}: \pi_{d}(\B\PO(m))\times \pi_{d}(\B\SO(n)) \arrow[r] & \pi_{d}(\B\PO(mn))\times\pi_{d}(\B\SO(N))
\end{tikzcd}
\end{equation}
is an epimorphism.
\end{proposition}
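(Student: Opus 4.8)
The plan is to understand $J_d$ in the single remaining degree $i = d = m$ (assuming as before, without loss of generality, that $m < n$), since all lower degrees were handled in Proposition \ref{P:Jeoim}. In degree $m$ the source and target homotopy groups are the first unstable homotopy groups of $\Or(m)$-type, tabulated in Table \ref{tab:1stunstable}, together with the stable group $\pi_m(\B\SO(n)) = \pi_{m-1}(\SO(n)) = \pi_{m-1}(\Or)$ for $n$ (which is stable since $m < n$, hence $m - 1 < n - 1$). So I would first record that $\pi_m(\B\PO(m)) = \pi_{m-1}(\PO(m)) = \pi_{m-1}(\SO(m))$ (using the identifications from Section \ref{sec:O} for $m \geq 3$; the cases $m = 2$ and small $m$ are checked directly), and that $\pi_m(\B\SO(n)) = \pi_{m-1}(\SO(n)) \cong \pi_{m-1}(\Or)$.

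Next I would compute the two components of $J_m$. The first component is $(f_\tensor)_*$ on $\pi_m$, i.e. $\tensor_*$ on $\pi_{m-1}$ of the orthogonal groups in the unstable degree; by the Remark following Corollary \ref{C:tpO} (the diagram \eqref{cd:stabtensor}) together with Proposition \ref{P:tppi1Oeo}/\ref{P:tppiiOeo} adapted to the quotient, this is $(x,y) \mapsto n\,\esta(x) + m y$ where $\esta: \pi_{m-1}(\PO(m)) \twoheadrightarrow \pi_{m-1}(\PO(mn))$ is the (surjective) stabilization. The second component is $\widetilde{\Tr}_m$, which by the definition of $\widetilde{\Tr}$ as the composite $(\tensor^2, \id)$ followed by $(\oplus^u, \oplus^v)$ followed by $\oplus$, and by chasing diagram \eqref{cd:stabTevenodd}, equals $(x,y) \mapsto u\,\esta'(2m\,\esta(x)) + v\esta(y) = 2um\,\esta_2(x) + v y$, where $\esta_2: \pi_{m-1}(\Or(m)) \twoheadrightarrow \pi_{m-1}(\Or(um^2))$ is again a surjective stabilization (note the two stabilization maps $\esta_1$ into $\pi_{m-1}(\PO(mn))$ and $\esta_2$ into $\pi_{m-1}(\SO(um^2)) \cong \pi_{m-1}(\SO(N))$ may differ, exactly as in the proof of Proposition \ref{P:Jo}). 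Thus $J_m$ factors as
\begin{equation*}
\pi_{m-1}(\PO(m)) \times \pi_{m-1}(\SO(n)) \xrightarrow{(\esta,\,\id)} \pi_{m-1}(\PO(mn)) \times \pi_{m-1}(\SO(n)) \xrightarrow{\begin{pmatrix} n & m \\ 2um & v \end{pmatrix}} \pi_m(\B\PO(mn)) \times \pi_m(\B\SO(N)),
\end{equation*}
where in the middle I have used the stable identification $\pi_{m-1}(\SO(n)) \cong \pi_{m-1}(\SO(mn)) \cong \pi_{m-1}(\SO(N))$.

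The first arrow $(\esta, \id)$ is surjective because $\esta$ is surjective in the unstable degree $m-1$ (it is the edge map in the fiber sequence $\SO(m) \hookrightarrow \SO(mn) \to \SO(mn)/\SO(m)$, as recalled in Section \ref{sec:O}). For the second arrow: by Proposition \ref{P:TtildaOeo} the determinant of $\left(\begin{smallmatrix} n & m \\ 2um & v \end{smallmatrix}\right)$ is $vn - 2um^2 = \pm 1$, so this matrix is invertible over $\ZZ$, hence induces an isomorphism on each of the abelian groups $\ZZ \oplus \ZZ$, $\ZZ/2 \oplus \ZZ/2$, $\ZZ \oplus \ZZ/2$, $\ZZ$, $\ZZ/2$ appearing in Table \ref{tab:1stunstable} — for the mixed group $\ZZ \oplus \ZZ/2$ one notes that a matrix of determinant $\pm 1$ acts invertibly on $A \oplus A$ and hence, by naturality/functoriality, on any $A_1 \oplus A_2$ with $A_i$ quotients of a common $A$, or one simply checks directly that reduction mod the relevant torsion is still determinant $\pm 1$. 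Composing a surjection with an isomorphism gives a surjection, so $J_m = J_d$ is an epimorphism. The one point requiring care — and the main obstacle — is the bookkeeping of the unstable degree: that $\esta$ really is onto here, that the formula for $\widetilde{\Tr}_m$ from the definition of $\widetilde{\Tr}$ (not merely its stable-range restriction in Proposition \ref{P:TtildaOeo}) is $2um\,\esta_2(x) + vy$ including the correct "$2m$" factor coming from $\tensor^2$ via Proposition \ref{P:rtpO}, and that the entries $n$ and $2um$ of the matrix are literally realized by the relevant stabilization-composed maps; diagrams \eqref{cd:stabTevenodd} and the analogue of \eqref{cd:stabtensor} are exactly the diagrams that make this precise.
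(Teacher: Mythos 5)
Your overall strategy — factor $J_m$ as $(\esta,\id)$ followed by the unimodular matrix $\left(\begin{smallmatrix} n & m \\ 2um & v \end{smallmatrix}\right)$ — is the same device the paper uses for the cases $m\equiv 0,4 \pmod 8$ (and in Proposition~\ref{P:Jo}), and it does work cleanly for every even $m>2$. For $m>2$ the groups $\pi_{m-1}(\PO(mn))$ and $\pi_{m-1}(\SO(N))$ are both identified with the stable group $\pi_{m-1}(\Or)$ from Table~\ref{tab:lowhg}, so the matrix genuinely acts on $A\times A$; moreover $\pi_{m-1}(\PO(m))\cong\pi_{m-1}(\Or(m))$ and the stabilization $\esta$ is the surjection from Table~\ref{tab:1stunstable} onto the stable quotient, so $(\esta,\id)$ is onto and the matrix is invertible, and you are done.

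There is, however, a genuine gap at $m=2$, which the paper treats as a separate case. When $m=2$, none of the identifications underlying your factorization are available: $\pi_1(\PO(2))=\ZZ$ is not the group listed in Table~\ref{tab:1stunstable} for $\Or(m)$ in the stable pattern, and the target $\pi_1(\PO(2n))=Z(\Spin(2n))=\ZZ/4$ (since $2n\equiv 2\pmod 4$) is not the stable group $\ZZ/2=\pi_1(\SO(N))$. Consequently the intermediate group $\pi_{m-1}(\PO(mn))\times\pi_{m-1}(\SO(n))$ is $\ZZ/4\times\ZZ/2$, a genuinely mixed pair that does not arise from a common $A$, and the integer matrix $\left(\begin{smallmatrix} n & 2\\ 4u & v\end{smallmatrix}\right)$ does not have an unambiguous action on it (what is the $\ZZ/2\to\ZZ/4$ map induced by the entry $2$?). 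Worse, the map you call $\esta\colon\pi_1(\PO(2))\to\pi_1(\PO(2n))$ is not a stabilization at all: there is no block-diagonal inclusion $\PO(2)\hookrightarrow\PO(2n)$ because $\diag(-I_2,I_{2n-2})\ne -I_{2n}$, so the fiber-sequence argument you invoke ($\SO(m)\hookrightarrow \SO(mn)\to \SO(mn)/\SO(m)$) establishes surjectivity only of $\pi_1(\SO(2))\twoheadrightarrow\pi_1(\SO(2n))=\ZZ/2$, whose image in $\pi_1(\PO(2n))=\ZZ/4$ is the index-2 subgroup $\{0,2\}$ — not all of $\ZZ/4$. The actual surjectivity in degree~$1$ has to be read off from the map of short exact sequences $0\to\pi_1(\Or(2k))\to\pi_1(\PO(2k))\to\pi_0 Z(\Or(2k))\to 0$ (diagram~\eqref{cd:fOeo}), as the paper does, not from the $\SO$-stabilization. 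Finally, the list of "abelian groups appearing in Table~\ref{tab:1stunstable}" is the wrong list for the matrix's domain and codomain: those are the \emph{source} groups acted on by $(\esta,\id)$; the matrix acts on the \emph{stable} groups of Table~\ref{tab:lowhg} (or on $\ZZ/4\times\ZZ/2$ when $m=2$). So you should handle $m=2$ explicitly, as the paper does, before running the unimodular-matrix argument for $m\geq 4$.
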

\begin{proof}
Suppose $m<n$. 

\noindent \textbf{CASE 1:} Let $m=2$. Since $m<n$, we have $n$ an odd integer with $n\geq 3$. By Propositions \ref{P:tppi1Oeo} and \ref{P:TtildaOeo}, the homomorphims $\tensor_{*}:\pi_{1}(\PO(2))\times \pi_{1}(\SO(n)) \to \pi_{1}(\PO(2n))$ and $\widetilde{\Tr}_{1}:\pi_{1}(\PO(2))\times \pi_{1}(\SO(n)) \to \pi_{1}(\SO(N))$ are given by $\tensor_{*}(x,y)=\pm x$ and $\widetilde{\Tr}_{1}(x,y)=ax+y$ for some $a\in \ZZ$, respectively. Thus $J_{2}: \ZZ\times \ZZ/2 \to \ZZ/4\times\ZZ/2$ is given by
\[
J_{2}(x,y)=\bigl(\pm x,ax+y\bigr).
\]
which is an epimorphism.

\noindent \textbf{CASE 2:} Let $m>2$. It follows from Propositions \ref{P:rdsO} and \ref{P:TtildaOeo}, and Corollaries \ref{C:rdsO} and \ref{C:rtpO} that $\Tr_{m-1}:\pi_{m-1}(\Or(m))\times\pi_{m-1}(\SO(n)) \to \pi_{m-1}(\SO(N))$ is given by
\[
\Tr_{m-1}(x,y)=\esta(u\esta(2m\esta(x)))+\esta(v\esta(y))=2um\esta(x)+vy,
\]
where $vn-2um^{2}=\pm1$. This can be seen by observing that $\Tr_{m-1}$ is equal to the sum of the two paths around diagram \eqref{cd:stabTevenodd}. Since $\widetilde{\Tr}_{i}=\Tr_{i}$ for all $i>1$ (see the proof of Proposition \ref{P:TtildaOeo}), we obtain that $J_{m}$ is given by
\begin{equation*}
\begin{tikzcd}[row sep=tiny]
J_{m}: \pi_{m}(\B\PO(m))\times \pi_{m}(\B\SO(n)) \arrow[r]& \pi_{m}(\B\PO(mn))\times\pi_{m}(\B\SO(N))\\
(x,y) \arrow[r,mapsto] & \Bigl(n\esta_{1}(x)+my,2um\esta_{2}(x)+vy\Bigr)
\end{tikzcd}
\end{equation*}
where
\begin{equation*}
\begin{tikzcd}
\esta_{1}:\pi_{m-1}(\Or(m)) \arrow[r,twoheadrightarrow] & \pi_{m-1}(\Or(mn))
\end{tikzcd}
\end{equation*}
and
\begin{equation*}
\begin{tikzcd}
\esta_{2}:\pi_{m-1}(\Or(m))\arrow[r,twoheadrightarrow] &\pi_{m-1}(\Or(m^{2}))
\end{tikzcd}
\end{equation*}
are epimorphisms. 

\begin{enumerate}
\item Observe that for $m=1$ or $m\equiv 3, 5, 6, 7$ (mod 8), the homomorphism $J_{m}$ has trivial target.

\item Let $m\equiv 0,4$ (mod 8). Then $J_{m}: (\ZZ\oplus \ZZ)\times \ZZ \to \ZZ\times\ZZ$  is given by
\[
J_{m}(x\oplus y,z)=\bigl(n\esta_{1}(x\oplus y)+mz,2um\esta_{2}(x\oplus y)+vz\bigr).
\]
Note that $J_{m}$ factors as 
\begin{equation*}
\begin{tikzcd}[column sep=huge]
(\ZZ\oplus\ZZ)\times \ZZ \arrow[r,"(\esta\text{,}\id)"] & \ZZ\times\ZZ \arrow[r,"\begin{pmatrix} n \;\;\;\; 2um \\ m \quad\;\;\; v \end{pmatrix}"] & \ZZ\times\ZZ.
\end{tikzcd}
\end{equation*}
Hence $J_{m}$ is an epimorphism.

\item Let $m\equiv 2$ (mod 8). Then $J_{m}: (\ZZ/2\oplus \ZZ)\times \ZZ/2 \to \ZZ/2\times\ZZ/2$  is given by
\[
J_{m}(x\oplus y,z)=\bigl(\esta_{1}(x\oplus y),z\bigr).
\]
Then $J_{m}$ is an epimorphism.
\end{enumerate}

If we suppose $n<m$, then in the same manner it can be proved that
\begin{equation*}
\begin{tikzcd}[row sep=tiny]
J_{n}: \pi_{n}(\B\PO(m))\times \pi_{n}(\B\SO(n)) \arrow[r]& \pi_{n}(\B\SO(mn))\times\pi_{n}(\B\Or(N))\\
(x,y) \arrow[r,mapsto] & \bigl(nx+m\esta_{1}(y),2umx+v\esta_{2}(y)\bigr)
\end{tikzcd}
\end{equation*}
is an epimorphism, where
\begin{equation*}
\begin{tikzcd}
\esta_{1}:\pi_{n-1}(\SO(n)) \arrow[r,twoheadrightarrow] & \pi_{n-1}(\SO(mn))
\end{tikzcd}
\end{equation*}
and
\begin{equation*}
\begin{tikzcd}
\esta_{2}:\pi_{n-1}(\SO(n))\arrow[r,twoheadrightarrow] &\pi_{n-1}(\SO(vn)).
\end{tikzcd}
\end{equation*}
\end{proof}

From Propositions \ref{P:Jeoim}, and \ref{P:Jeom} we obtain Corollary \ref{C:Jeo}.

\begin{corollary}\label{C:Jeo}
Let $m$ and $n$ be relatively prime positive integers such that $m$ is even, and $n$ is odd. Let $d$ denote $\min\{m,n\}$. The map $J$ is $d$-connected.
\end{corollary}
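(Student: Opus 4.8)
The plan is to read Corollary \ref{C:Jeo} off directly from the two preceding propositions by unwinding the definition of a $d$-connected map. Recall that a based map $f\colon X\to Y$ between connected spaces is $d$-connected precisely when the induced map $f_{*}\colon\pi_{i}(X)\to\pi_{i}(Y)$ is an isomorphism for every $i<d$ and an epimorphism for $i=d$. So essentially the only thing to do is to verify that $J$ fits this framework and that the two hypotheses on homotopy groups are exactly what Propositions \ref{P:Jeoim} and \ref{P:Jeom} provide.

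First I would note that $J$ is a map between connected spaces: for any topological group $G$ the classifying space $\B G$ is connected, so $\B\PO(m)\times\B\PO(n)$ and $\B\PO(mn)\times\B\SO(N)$ are both connected and $J$ is a bijection on $\pi_{0}$; in particular the choice of basepoint is immaterial. Next, for $i<d$ I would invoke Proposition \ref{P:Jeoim}, which asserts exactly that $J_{i}$ is an isomorphism in this range. Finally, for $i=d$ I would invoke Proposition \ref{P:Jeom}, which asserts that $J_{d}$ is an epimorphism. Assembling these three observations yields that $J$ is $d$-connected, which is the content of the corollary.

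At the level of the corollary itself there is no genuine obstacle; it is a bookkeeping consequence of the two propositions. The delicate work has already been carried out inside them, and if I were to point to the hardest step it would be the verification of Proposition \ref{P:Jeom}: in degree $d$ one is at the first unstable homotopy group of the orthogonal groups involved, so the stabilization maps entering the definitions of $f_{\tensor}$ and $\widetilde{\Tr}$ are merely epimorphisms rather than isomorphisms, and one must track their interaction with the unimodular integer matrix appearing in the proof of that proposition by means of the stabilization diagram \eqref{cd:stabTevenodd}. The low-degree cases $i=1,2$, where $\pi_{1}$ of a classifying space records $\pi_{0}$ of a possibly disconnected projective orthogonal group, are handled separately via Propositions \ref{P:tppi0Oeo} and \ref{P:tppi1Oeo}; but once those results are available, the passage to Corollary \ref{C:Jeo} is immediate.
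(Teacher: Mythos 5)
Your argument is correct and matches the paper exactly: the paper derives the corollary immediately from Propositions \ref{P:Jeoim} and \ref{P:Jeom}, with no further work, which is precisely the bookkeeping you describe. The added remarks about connectedness of classifying spaces and about where the real difficulty lies are accurate but not needed for the deduction itself.
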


\subsection{Factorization through \texorpdfstring{$f_{\tensor}:\B\fPO{m}\times\B\fSO{n}\;\;\rightarrow\;\;\B\fPO{mn}$}{f:BPOm x BSOn-->BPOmn}}

\begin{proof}[Proof of Theorem \ref{mainO}]
Diagrammatically speaking, we want to find a map 
\begin{equation*}
\begin{tikzcd}
\cA_{m}\times\cA_{n}:X \arrow[r] &  \B\PO(m)\times\B\SO(n)
\end{tikzcd}
\end{equation*}
such that diagram \eqref{cd:lpO} commutes up to homotopy
\begin{equation}\label{cd:lpO}
\begin{tikzcd}[execute at begin picture={\useasboundingbox (-4.5,-1) rectangle (4.5,1);},row sep=large,column sep=huge]
& \B\PO(m)\times\B\SO(n) \arrow[d,"f_{\tensor}"] \\
 X \arrow[r,"\cA"] \arrow[ur,dotted,"\cA_{m}\times\cA_{n}",bend left=20]  & \B\PO(mn).
\end{tikzcd}
\end{equation}

We establish the result for $m<n$; the case of $n<m$ follows similarly. Corollary \ref{C:Jeo} yields a map $J: \B\PO(m)\times\B\SO(n) \rightarrow \B\PO(mn)\times\B\SO(N)$ where $N$ is some positive integer so that $N\gg n>m$. Observe that $f_{\tensor}$ factors through $\B\PO(mn)\times\B\SO(N)$, so we can write $f_{\tensor}$ as the composite of $J$ and the projection $\proj_{1}$ shown in diagram \eqref{cd:JprojO}.

\begin{equation}\label{cd:JprojO}
\begin{tikzcd}[row sep=large,column sep=huge]
\B\PO(m)\times\B\SO(n) \arrow[r,"J"] \arrow[d,"f_{\tensor}"] & \B\PO(mn)\times\B\SO(N) \\
\B\PO(mn) \arrow[ur,leftarrow,"\proj_{1}"]& 
\end{tikzcd}
\end{equation}

Since $J$ is $m$-connected and $\dim(X)\leq m$, then by Whitehead's theorem
\begin{equation*}
\begin{tikzcd}
J_{\#}:\bigl[X, \B\PO(m)\times\B\SO(n)\bigr] \arrow[r] & \bigl[X,\B\PO(mn)\times\B\SO(N)\bigr]
\end{tikzcd}
\end{equation*}
is a surjection, \cite[Corollary 7.6.23]{SpaAT2012}.

Let $s$ denote a section of $\proj_{1}$. The surjectivity of $J_{\#}$ implies $s\circ \cA$ has a preimage $\cA_{m}\times\cA_{n}:X \rightarrow \B\PO(m)\times\B\SO(n)$ such that $J\circ (\cA_{m}\times\cA_{n})\simeq s\circ \cA$.

Commutativity of diagram \eqref{cd:lpO} follows from commutativity of diagram \eqref{cd:JprojO}. Thus, the result follows.
\end{proof}

The map $f_{\tensor}:\B\PO(m)\times\B\SO(n)\to\B\PO(mn)$ does not necessarily have a section. 
\begin{proposition}\label{nosection}
Let $m$ and $n$ be positive integers such that $m \in \{2, 4, 6, 8, 10\}$, $n$ is odd and $n>16$. Then $f_{\tensor}:\B\PO(m)\times\B\SO(n)\to\B\PO(mn)$ does not have a section.
\end{proposition}
\begin{proof}
Suppose $f_{\tensor}$ has a section, namely $\sigma$. By Proposition \ref{P:tppiiOeo}, the homomorphism induced on homotopy groups by $f_{\tensor}$ is given by $(x,y)\mapsto n\esta(x)+m\esta(y)$ for all $x\in \pi_{i}(\B\PO(m))$, $y\in\pi_{i}(\B\SO(n))$, and $i>2$.

Let $C$ denote the set $\bigl\{(4,2), (8,4), (12,6), (16,8), (16,10)\bigr\}$. From \cite{MiTo63}, \cite[Table 6.VII, Appendix A]{edm93} and Table \ref{tab:lowhg}, $\pi_{i}(\B\PO(m))$ is torsion and $\pi_{i}(\B\PO(n))\iso\pi_{i}(\B\PO(mn))\iso \ZZ$ for all $(i,m) \in C$. Since $\sigma$ is a section of $f_{\tensor}$, the image of $(f_{\tensor})_{*}\circ \sigma_{*}$ is equal to $\pi_{i}(\B\PO(mn))\iso\ZZ$ for $(i,m) \in C$. However, the image of $(f_{\tensor})_{i}:\pi_{i}(\B\PO(m))\times\pi_{i}(\B\SO(n))\to\pi_{i}(\B\PO(mn))$ is $m\ZZ$ for all $(i,m) \in C$. 
\end{proof}

If $\cA$ is a topological Azumaya algebra of degree $mn$ with an orthogonal involution over a finite CW complex of dimension higher than $\min\{m,n\}$, then $\cA$ may not have a decomposition as the one in Theorem \ref{mainO}.

\begin{example}
Let $m$ and $n$ be positive integers such that $m \in \{2, 4, 6, 8, 10\}$, $n$ is odd and $n>16$. Let $\cS$ be a topological Azumaya algebra of degree $mn$ with an orthogonal involution on the $i$-sphere $S^{i}$ such that $\cS$ generates $\pi_{i}(\B\PO_{mn})$ for $(i,m) \in C$. An argument similar to the one used in Proposition \ref{nosection} can be used to prove that $\cS$ cannot be decomposed as $\cA_{m}\tensor \cA_{n}$ for $\cA_{m}$ and $\cA_{n}$ topological Azumaya algebras of degrees $m$ and $n$, respectively, with orthogonal involutions.
\end{example}

\section{Tensor Product Decomposition of Orthogonal Bundles} \label{sec:mainObundles}

\begin{proposition}\label{P:TtildaOo}
Let $m$ and $n$ be relatively prime positive integers such that $m$ and $n$ are odd. Let $d$ denote $\min\{m,n\}$. There exists a homomorphism $\Tr: \SO(m)\times \SO(n)\rightarrow \SO(N)$, for some positive integer $N$, such that for all $i<d-1$ the homomorphisms induced on homotopy groups 
\begin{equation*}
\begin{tikzcd}
\Tr_{i}:\pi_{i}(\SO(m))\times \pi_{i}(\SO(n)) \arrow[r] & \pi_{i}(\SO(N))
\end{tikzcd}
\end{equation*}
are given by $\Tr_{i}(x,y)=ux+vy$
where $x \in \pi_{i}(\SO(m))$, $y \in \pi_{i}(\SO(n))$, and $u, v$ are some positive integers, independent of $i$, for which $|vn-um|=1$.
\end{proposition}
\begin{proof}
Assume without loss of generality $m<n$. Since $m$ and $n$ are relatively prime, there exist positive integers $u$ and $v$ such that $vn-um=\pm1$. Let $N$ denote $um+vn$, and let $\Tr$ denote the composite
\begin{equation*}
\begin{tikzcd}[column sep=large]
\SO(m)\times \SO(n) \arrow[r,"(\oplus^{u}\text{,}\oplus^{v})"] &
\SO(um)\times \SO(vn) \arrow[r,"\oplus"] &
\SO(N).
\end{tikzcd}
\end{equation*}
It follows from Corollaries \ref{C:dsO} and \ref{C:rdsO} we have that $\Tr_{i}(x,y)=ux+vy$ for all $i<m-1$.
\end{proof}

When $m$ and $n$ are odd integers, the map  $J$ defined in Section \ref{sec:mainO} becomes
\begin{equation*}
\begin{tikzcd}[row sep=tiny]
J:\B\SO(m)\times\B\SO(n) \arrow[r,mapsto] & \B\SO(mn)\times\B\SO(N)\\
(x,y) \arrow[r,mapsto] & \Bigl(f_{\tensor}(x,y)\text{,}\B\widetilde{\Tr}(x,y)\Bigr).
\end{tikzcd}
\end{equation*}

\begin{proposition}\label{P:Jo}
Let $m$ and $n$ be relatively prime positive integers such that $m$ and $n$ are odd. The  map $J$ is $d$-connected where $d$ denotes $\min\{m,n\}$.
\end{proposition}
\begin{proof}
Without loss of generality, let $m<n$. Let $i<m$. By Propositions \ref{P:tppiiOo} and \ref{P:TtildaOo}, and Corollary \ref{C:tpO} the homomorphism
\begin{equation*}
J_{i}: \pi_{i}(\B\SO(m))\times \pi_{i}(\B\SO(n)) \rightarrow \pi_{i}(\B\SO(mn))\times\pi_{i}(\B\SO(N))
\end{equation*}
is represented by the matrix $\displaystyle{\begin{pmatrix}
n & m\\
u & v
\end{pmatrix}}$.

Observe that this matrix is invertible for all $i<m$ because by Proposition \ref{P:TtildaOo} its determinant satisfies $nv-um=\pm 1$.

Let $i=m$. Propositions \ref{P:rdsO} and \ref{P:TtildaOo}, and Corollary \ref{C:rdsO} show that the homomorphism $\Tr_{m-1}:\pi_{m-1}(\SO(m))\times\pi_{m-1}(\SO(n)) \to \pi_{m-1}(\SO(N))$ is given by $\Tr_{m-1}(x,y)=\esta(u\esta(x))+\esta(v\esta(y))=u\esta(x)+vy$, where $vn-um=\pm1$. This can be seen by observing that $\Tr_{m-1}$ is equal to the sum of the two paths around diagram \eqref{cd:stabTodd}.

\begin{equation}\label{cd:stabTodd}
\begin{tikzcd}[row sep=large,column sep=tiny]
&\pi_{m-1}(\SO(m))\times\pi_{m-1}(\SO(n))  \arrow[dr,twoheadrightarrow,"\proj_{2}"] & \\
\pi_{m-1}(\SO(m)) \arrow[ru,twoheadleftarrow,"\proj_{1}"] \arrow[d,twoheadrightarrow,"\esta"] & &\pi_{m-1}(\SO(n)) \arrow[d,rightarrow,"\esta"]\\
\pi_{m-1}(\SO(um)) & & \pi_{m-1}(\SO(vn)) \arrow[dl,rightarrow,"\times v"] \arrow[u,leftarrow,"\iso"]\\
& \pi_{m-1}(\SO(um))\times \pi_{m-1}(\SO(vn)) \arrow[ul,leftarrow,"\times u"] \arrow[d,rightarrow,"\esta\times\esta"]  &\\
&\pi_{m-1}(\SO(N))\times\pi_{m-1}(\SO(N)) \arrow[d,rightarrow,"+"] \arrow[u,leftarrow,"\iso"]& \\
&\pi_{m-1}(\SO(N))&  
\end{tikzcd}
\end{equation}

Hence $J_{m}$ is given by
\begin{equation*}
\begin{tikzcd}[row sep=tiny]
J_{m}: \pi_{m}(\B\SO(m))\times \pi_{m}(\B\SO(n)) \arrow[r]& \pi_{m}(\B\SO(mn))\times\pi_{m}(\B\Or_{N})\\
(x,y) \arrow[r,mapsto] & \bigl(n\esta_{1}(x)+my,u\esta_{2}(x)+vy\bigr)
\end{tikzcd}
\end{equation*}
where 
\begin{equation*}
\begin{tikzcd}
\esta_{1}:\pi_{m-1}(\SO(m)) \arrow[r,twoheadrightarrow] & \pi_{m-1}(\SO(mn))
\end{tikzcd}
\end{equation*}
and
\begin{equation*}
\begin{tikzcd}
\esta_{2}:\pi_{m-1}(\SO(m))\arrow[r,twoheadrightarrow] &\pi_{m-1}(\SO(um))
\end{tikzcd}
\end{equation*}
are epimorphisms.

If $m=1$ or $m\equiv 3,5,7$ (mod 8), then $J_{m}$ has trivial target.

Let $m\neq1$ and $m\equiv 1$ (mod 8), then 
\[
J_{m}: (\ZZ/2\oplus\ZZ/2)\times \ZZ/2 \to \ZZ/2\times\ZZ/2
\]
is given by
\begin{align*}
J_{m}(x\oplus y,z)&=\bigl(\esta_{1}(x\oplus y)+z,u\esta_{2}(x\oplus y)+vz\bigr)
\end{align*}
because $m$ and $n$ are odd. Observe that $J_{m}$ factors as
\begin{equation*}
\begin{tikzcd}[column sep=huge]
(\ZZ/2\oplus\ZZ/2)\times \ZZ/2 \arrow[r,"(\esta\text{,}\id)"] & \ZZ/2\times\ZZ/2 \arrow[r,"\begin{pmatrix} n \quad m \\ u \quad v \end{pmatrix}"] & \ZZ/2\times\ZZ/2.
\end{tikzcd}
\end{equation*}
Hence $J_{m}$ is an epimorphism.
\end{proof}

\begin{theorem}\label{T:OrthoBundles}
Let $m$ and $n$ be relatively prime positive integers such that $m$ and $n$ are odd. Let $X$ be a CW complex such that $\dim(X)\leq \min\{m,n\}$. If $\cV$ is an orthogonal complex vector bundle of rank $mn$ over $X$, then there exist orthogonal complex vector bundles $\cV_{m}$ and $\cV_{n}$ of ranks $m$ and $n$, respectively, such that $\cV\iso \cV_{m}\tensor\cV_{n}$. 
\end{theorem}
\begin{proof}
We can use Propositions \ref{P:TtildaOo} and \ref{P:Jo} to prove the result in a similar manner to the proof of Theorem \ref{mainO}.
\end{proof}

\newpage
\bibliographystyle{alpha}
\bibliography{MyTAABiblio}
\end{document}